\newtheorem{remark}{Remark}[section]
\newtheorem{lemma}[remark]{Lemma}
\newtheorem{theorem}[remark]{Theorem}
\newtheorem{proposition}[remark]{Proposition}
\newcommand{\ml}{\left[\hspace*{-0.05cm}\left[}
\newcommand{\mr}{\right]\hspace*{-0.05cm}\right]}
\newcommand{\mode}[1]{\ml #1\mr}
\begin{document}

\title{Equidistant dimension of Cartesian product graphs}

\author{Adri\`a Gispert-Fern\'andez $^{a,}$\thanks{Email: \texttt{adria.gispert@estudiants.urv.cat}}
\and
Juan A. Rodr\'iguez-Vel\'azquez $^{a,}$\thanks{Email: \texttt{juanalberto.rodriguez@urv.cat}}
\and
Ismael G. Yero $^{b,}$\thanks{Email: \texttt{ismael.gonzalez@uca.es}}
}

\maketitle

\begin{center}
$^a$ Departament d'Enginyeria Inform\`atica i Matem\`atiques, Universitat Rovira i Virgili, Spain \\
	\medskip

	$^b$ Departamento de Matem\'aticas, Universidad de C\'adiz, Algeciras Campus, Spain
    \\
	\medskip
\end{center}
		
\begin{abstract}
Given a connected graph $G$, the equidistant dimension of $G$ represents the cardinality of the smallest set of vertices $S$ of $G$ such that for any two vertices $x,y\notin S$ there is at least one vertex in $S$ equidistant to both $x,y$ in terms of distances. In this article, we compute the equidistant dimension of some Cartesian product graphs including two-dimensional Hamming graphs, some hypercubes, prisms of cycle, and squared grid graphs.
\end{abstract}

\noindent
{\bf Keywords:} distance-equalizer sets, equidistant dimension, Cartesian product graphs, two-dimensional Hamming graphs, hypercubes, prisms graphs of cycles, squared grid graphs

\noindent
AMS Subj.\ Class.\ (2020): 05C12, 05C76

\section{Introduction}

The distance-equalizer sets of graphs were introduced in \cite{Gonzalez-2022}, as a new graph structure that, in a local manner, could be somehow related to privacy in social networks in a similar way as the $k$-antiresolving sets, already known from \cite{Trujillo-Rasua-2016}. A $k$-antiresolving set $S$ of a graph $G$ is understood to have the following property. Given an integer $k\ge 1$, all the vertices not in $S$ can be partitioned into a collection of subsets of cardinality at least $k$ so that vertices belonging to a same set of such collection share the same distance to each vertex of $S$. In contrast with this, a distance-equalizer set is a set $D$ of vertices of $G$, so that for any two vertices $x,y\notin D$ there is a vertex in $w\in D$ such that the distances from $w$ to $x$ and $w$ are the same.

It can be also noted that distance-equalizer sets are a kind of dual notion to the classical resolving sets of graphs, which require the following property. For any two vertices of a graph, there is vertex $w$ in the resolving set such that the distances from $w$ to the two mentioned vertices differ. Resolving sets are nowadays very well studied. To see more information about them, we simply suggest the recent surveys \cite{Tillquist,Kuziak}.

In a formal way, from now on in our whole exposition we consider $G=(V(G), E(G))$ is a connected undirected graph without loops and multiple edges. A set of vertices $D\subseteq V(G)$ is called a \textit{distance-equalizer set} of $G$ if for any two vertices $x,y\notin D$, there is a vertex $w\in D$ such that $d_G(x,w)=d_G(y,w)$, where $d_G(u,v)$ (or simply $d(u,v)$ if $G$ is clear) is the \textit{distance} between $u$ and $v$; and it represents the length of a shortest path between $u$ and $v$, also written as, a shortest $u,v$-path. A distance-equalizer set of the smallest possible cardinality is called a \textit{distance-equalizer basis}. The cardinality of any distance-equalizer basis is called the \textit{equidistant dimension} of $G$, and it is denoted by $\xi(G)$

As already mentioned, this novel parameter  was introduced  in \cite{Gonzalez-2022}, where the authors explored its primary properties and proposed some applications to other problems not necessarily in the context of graph theory. After this seminal paper, some more contributions on the topic have been given in \cite{Gispert,Kratica,Savic}. For instance, it was proved in \cite{Gispert} that finding the equidistant dimension of graphs is NP-hard, which suggest that bounding or computing the value of this parameter for some non trivial families of graphs is worth of attention. In \cite{Gispert}, a detailed study of lexicographic product graphs was also described. In \cite{Kratica,Savic}, families of Johnson and Kneser graphs and of some graphs of convex polytopes were considered. We may remark the interesting fact that, surprisingly, the equidistant dimension of paths $P_n$ was shown to be very challenging to compute in the seminal article  \cite{Gonzalez-2022}, where the authors proved that such parameter for paths $P_n$ relates to 3-AP-free sets and a function, denoted $r(n)$, introduced by Erd\"os and Tur\'an in the article \cite{Erdos}. A subset of integers $S\subseteq \{1,\dots,n\}$ is 3-AP-free if $a+c\ne 2b$ for every distinct terms $a,b,c\in S$. The largest cardinality of a 3-AP-free subset of $\{1,\dots,n\}$ is denoted by $r(n)$.

\subsection{Preliminaries}

We next include some necessary terminology and notations that shall be used in our exposition, as well as, some required known results. First, for any integer $n$, we shall write $[n]=\{1,\dots,n\}$. Also, for the integers $k$ and $q$, by $\mode{k}_q$ we represent the value of $k\pmod q$.

The next result, proved in \cite{Gonzalez-2022}, will be useful along our exposition, while working with bipartite graphs. We indeed show that the bound of such result is achieved in several situations.

\begin{proposition}{\em \cite{Gonzalez-2022}}
\label{PropoBipartiteUPC}
Let $G$ be a bipartite graph with partite sets $A$ and $B$. If $X$ is a distance-equalizer
set of $G$, then $A \subseteq X$ or $B \subseteq X$. Consequently, $\xi(G)\ge \min\{|A|, |B|\}$.
\end{proposition}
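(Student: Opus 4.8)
The plan is to prove the contrapositive of the key structural claim: if $X$ is a distance-equalizer set that omits at least one vertex from $A$ and at least one vertex from $B$, we derive a contradiction. So suppose there exist $x \in A \setminus X$ and $y \in B \setminus X$. Since $x$ and $y$ lie in different partite sets, I want to exploit the fundamental parity property of bipartite graphs: in a bipartite graph, the distance $d_G(u,v)$ between two vertices has a parity determined entirely by which partite sets $u$ and $v$ belong to. Specifically, for any vertex $w$, the two distances $d_G(x,w)$ and $d_G(y,w)$ must have opposite parities, because $x \in A$ and $y \in B$ are at odd distance from each other (any path between different partite sets has odd length), and the triangle-type parity relation in bipartite graphs forces $d_G(x,w)$ and $d_G(y,w)$ to differ in parity.

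The first step I would carry out is to establish this parity lemma carefully. In a connected bipartite graph with parts $A$ and $B$, fix any reference; then for each vertex $w$, the parity of $d_G(w,u)$ depends only on the part containing $u$ relative to the part containing $w$. Concretely, if $x \in A$ and $y \in B$, then for every $w \in V(G)$ the integers $d_G(x,w)$ and $d_G(y,w)$ have different parities, since $d_G(x,w) + d_G(w,y) \geq d_G(x,y)$ and, more to the point, $d_G(x,w) \equiv d_G(y,w) + 1 \pmod 2$ because $x$ and $y$ sit in opposite parts. This is the engine of the whole argument.

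The second step is the immediate consequence: since $d_G(x,w)$ and $d_G(y,w)$ always have opposite parities, they can never be equal, for any $w \in V(G)$ whatsoever. In particular, there is no $w \in X$ with $d_G(x,w) = d_G(y,w)$. But $x,y \notin X$, so the defining condition of a distance-equalizer set fails for this pair $x,y$, contradicting that $X$ is a distance-equalizer set. Hence no distance-equalizer set can omit a vertex from both parts, which means $A \subseteq X$ or $B \subseteq X$. The consequence $\xi(G) \geq \min\{|A|, |B|\}$ then follows instantly, since any distance-equalizer set contains at least one of the two parts and therefore has cardinality at least $\min\{|A|, |B|\}$.

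I do not anticipate a genuine obstacle here; the proof is short and the only place demanding care is the parity lemma. The potential pitfall is asserting the parity statement without justification, so I would make sure to spell out why distances across partite sets are odd and distances within a partite set are even in a connected bipartite graph (every cycle is even, so all walks between two fixed vertices share the same length parity, and that common parity is $0$ iff the endpoints lie in the same part). Once that parity fact is in hand, the equidistance condition is impossible for a cross-part pair, and everything else is a one-line deduction.
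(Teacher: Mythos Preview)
Your argument is correct. The parity observation is exactly the right engine: in a connected bipartite graph with parts $A$ and $B$, for any $w$ the distances $d_G(x,w)$ and $d_G(y,w)$ have opposite parities whenever $x\in A$ and $y\in B$, so no vertex can equalize a cross-part pair, and the contrapositive goes through as you describe.

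As for comparison with the paper: there is nothing to compare. The paper does not prove this proposition; it is quoted from \cite{Gonzalez-2022} and used as a black box (see also Remark~\ref{lemmahypercubes}, which records the same parity fact you invoke). Your write-up is the standard proof and would be appropriate to include if a self-contained argument were wanted.
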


Although, it is a very well known fact, we next recall the following observation that shall be (implicitly) used several times along our exposition, and that is it based on the structural properties of bipartite graphs.

\begin{remark} \label{lemmahypercubes}
Let $G$  be  a connected bipartite graph, with partite sets $A$ and $B$.
Two different vertices $v,w\in V(G)$ satisfy that $d(v,w)\equiv0\pmod2$ if and only if $v,w\in A$ or $v,w\in B$.
\end{remark}

The \textit{Cartesian product} of two graph $G$ and $H$, denoted $G\square H$, has vertex set $V(G)\times V(H)$, and two vertices $(g,h),(g',h')$ are adjacent in $G\square H$ if either $g=g'$ and $hh'\in E(H)$; or $h=h'$ and $gg'\in E(G)$. Along our exposition, we  shall use the notation  $(x_v,y_v)$ to refer to a vertex $v=(x,y)\in V(G\square H)$. Also, given a  subset  $S\subseteq V(G\square H)$  we define  the projections sets $X_{S}=\{x_v: \, v\in S\}$ and $Y_{S}=\{y_v: \, v\in S\}$ associated to $S$. In this context, $|X_{S}|=k$ (respectively $|Y_{S}|=k$) means that there are $k$ different vertices in $X_{S}$ (respectively in $Y_{S}$).

In this article we compute the value of the equidistant dimension of several classical Cartesian product graphs. In particular, we prove the following results.

\begin{itemize}
\item For any two dimensional Hamming graph $K_n\square K_m$ with $m,n\ge 2$,
$$
\xi(K_n\square K_m)=\left\{ \begin{array}{ll}
    m; & \mbox{if $n=2$}, \\
    \min\{m,n,5\}; & \mbox{if $\min\{m,n\}\ge 3$.}
\end{array} \right.
$$
\item For any hypercube $Q_n$ of dimension $n\ge 2$ such that  $n\not\equiv0 \text{ mod }4$,
$$\xi(Q_n)=2^{n-1}.$$
Moreover, if $n\equiv0 \text{ mod }4$, then $$2^{n-1}\le\xi(Q_n)\le 2^{n-1}+2^{\frac{n}{2}-2}.$$
\item For any prism graph $C_n\square K_2$ with $n\ge 3$
    $$\xi(C_n\square K_2)=\left\{ \begin{array}{ll}
    \frac{5n-2}{4}; &  \text{ if }  \, \, n\equiv 2 \pmod 4,\\
    n; &  \text{ otherwise.}
    \end{array} \right. $$
\item For any squared grid graph $P_n\square P_n$ with $n\ge 2$,
$$
\xi\left(P_n\square P_n\right)=\left\lceil\frac{n^2}{2}\right\rceil.
$$
\end{itemize}

\section{Two-dimensional Hamming graphs}

The two-dimensional Hamming graph is understood as the Cartesian product of two complete graphs $K_n$ and $K_m$. Studies on its metric dimension and several other distance related parameters of such graph class are very well known in the literature. For instance, a fairly complete study on its metric dimension was considered in \cite{Caceres}. We next address the equidistant dimension of such graphs, and we present them into separated situations in order to simplify the expositions. Along the section, we shall write $V(K_n)=[n]$ and $V(K_m)=[m]$, and also $V(K_n\square K_m)=[n]\times [m]$.

\begin{proposition}
For any integer  $m\ge 2$,
$$\xi(K_2\square K_m)=m.$$
\end{proposition}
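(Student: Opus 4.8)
The plan is to first pin down the distance function on $K_2\square K_m$ and then exploit a rigidity forced by the two-element factor $K_2$. Writing $V(K_2\square K_m)=[2]\times[m]$, one checks directly from the definition of the Cartesian product that
$$d\big((g,j),(g',k)\big)=\delta(g,g')+\delta(j,k),$$
where $\delta(a,b)=0$ if $a=b$ and $\delta(a,b)=1$ otherwise; equivalently, two vertices are at distance $0$, $1$, or $2$ according to whether they agree in both coordinates, in exactly one coordinate, or in neither. I would record this as the starting observation, since every distance comparison below reduces to comparing such indicator sums.

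For the lower bound I would argue that a distance-equalizer set must meet each of the $m$ columns $\{(1,j),(2,j)\}$, $j\in[m]$. The crucial point is that the pair $x=(1,j)$, $y=(2,j)$ can never be equalized: for any $w=(g_w,j_w)$ one has $d(x,w)-d(y,w)=\delta(g_w,1)-\delta(g_w,2)$, and since $g_w\in\{1,2\}$ this difference is $\pm 1$, hence never $0$. Consequently, if some column contributed no vertex to a set $D$, then both $(1,j)$ and $(2,j)$ would lie outside $D$ and form a pair admitting no equalizer in $D$, contradicting that $D$ is a distance-equalizer set. As the $m$ columns are pairwise disjoint, any distance-equalizer set contains at least one vertex from each, so $\xi(K_2\square K_m)\ge m$.

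For the matching upper bound I would exhibit a distance-equalizer set of size $m$. When $m\ge 3$ I take $D$ to be a full layer, say $D=\{(1,j):j\in[m]\}$, so every vertex outside $D$ has the form $(2,\ell)$. Given two such vertices $x=(2,j)$ and $y=(2,k)$ with $j\ne k$, I pick any column index $i\in[m]\setminus\{j,k\}$ (which exists precisely because $m\ge 3$) and use $w=(1,i)\in D$: then $d(x,w)=d(y,w)=2$, so $w$ equalizes them. For the remaining case $m=2$ the graph is just $C_4$, and the diagonal $D=\{(1,1),(2,2)\}$ works, since the two vertices $(2,1),(1,2)$ outside $D$ are both at distance $1$ from $(1,1)\in D$ (this also follows from Proposition \ref{PropoBipartiteUPC}, as $C_4$ is bipartite with parts of size $2$). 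Hence $\xi(K_2\square K_m)\le m$ in all cases, completing the proof.

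I expect the only genuinely delicate point to be the lower-bound observation that same-column pairs are inherently non-equalizable; once this rigidity is isolated, both bounds follow with minimal computation. The small case $m=2$ requires separate attention only because the full-layer construction runs out of spare columns, but it is immediate.
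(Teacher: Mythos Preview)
Your proof is correct and follows essentially the same approach as the paper: both obtain the lower bound by observing that a same-column pair $(1,j),(2,j)$ can never be equalized (so any distance-equalizer set meets every column), and both obtain the upper bound via the full layer $\{1\}\times[m]$ for $m\ge 3$ together with a direct check for $m=2$. Your presentation is slightly more streamlined in making the distance formula explicit and computing the difference $d(x,w)-d(y,w)=\pm1$, but the underlying argument is the same.
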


\begin{proof}
First, notice that $\xi(K_2\square K_2)=\xi (C_4)=2$. Hence, we assume that $m\ge 3$.
Let $A_1=\{1\}\times [m]$ and let $A_2= \{2\}\times [m]$.  Let $S\subseteq V(K_2\square K_m)$ be a distance-equalizer set of $K_2\square K_m$. Suppose that $|S|<m$.
Thus, it is readily seen that there exists $k\in [m]$ such that $v'=(1,k),v''=(2,k)\in V(K_2\square K_m)\setminus S$. Hence, for each  $s'\in S\cap A_1$ we have $d(v',s')=1\neq 2=d(v'',s')$, as well as, for each $s''\in S\cap A_2$ we have $d(v',s'')=2\neq 1=d(v'',s'')$, which is a contradiction. Therefore, $\xi(K_2\times K_m)= |S|\ge m.$

On the other hand, since $m\ge 3$, for every $x,y\in A_2$ there exists $z\in A_1$ such that $d(x,z)=d(y,z)=2$, which implies that $A_1$ is a distance-equalizer set of $K_2\square K_m$. Thus,  $\xi(K_2\times K_m)\le |A_1|= m$, which completes the equality proof.
\end{proof}

\begin{proposition}
For any integer  $m\ge 3$,
$$
\xi(K_3\square K_m)=3.
$$
\end{proposition}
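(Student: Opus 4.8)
The plan is to work directly from the distance structure of $K_3\square K_m$. Writing a vertex as $(r,c)$ with $r\in[3]$ and $c\in[m]$, the Cartesian product formula gives $d\big((r,c),(a,b)\big)=[r\neq a]+[c\neq b]$, so the graph has diameter $2$ and every pair of distinct vertices is at distance $1$ or $2$. Consequently, for a fixed $w\in D$ and a vertex $x\notin D$, the value $d(x,w)$ lies in $\{1,2\}$, and $D$ is a distance-equalizer set precisely when there is \emph{no} pair $x,y\notin D$ with $d(x,w)\neq d(y,w)$ for every $w\in D$; I will call such a pair \emph{conflicting}. I would phrase the whole argument in terms of producing (for the lower bound) or excluding (for the upper bound) conflicting pairs.

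For the upper bound $\xi(K_3\square K_m)\le 3$, I would exhibit the $K_3$-fibre $D=\{(1,1),(2,1),(3,1)\}$. Every vertex outside $D$ has the form $(r,c)$ with $c\neq 1$. Given two such vertices $x=(r,c)$ and $y=(s,e)$, I distinguish two cases. If $r=s$, then $w=(r,1)\in D$ satisfies $d(x,w)=d(y,w)=1$. If $r\neq s$, I take $t$ to be the unique row in $[3]\setminus\{r,s\}$ (here $n=3$ is exactly what guarantees a third row), and then $w=(t,1)\in D$ gives $d(x,w)=d(y,w)=2$, since both $x$ and $y$ differ from $w$ in both coordinates. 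In either case $x,y$ are equidistant from some $w\in D$, so $D$ is a distance-equalizer set.

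For the lower bound $\xi(K_3\square K_m)\ge 3$, I would first note that any superset of a distance-equalizer set is again one, so it suffices to show that no set of size exactly $2$ works (a set of size $0$ or $1$ would extend to a distance-equalizer set of size $2$). Fix $D=\{w_1,w_2\}$ with $w_i=(a_i,b_i)$. I will build a conflicting pair by producing a vertex $x$ at distance $2$ from both $w_1,w_2$ and a vertex $y$ at distance $1$ from both. For $x$, I choose $x=(r,c)$ with $r\in[3]\setminus\{a_1,a_2\}$ and $c\in[m]\setminus\{b_1,b_2\}$; these choices exist because there are three rows and $m\ge 3$ columns, and then $d(x,w_i)=2$ and $x\notin D$. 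For $y$, I construct a common neighbour of $w_1$ and $w_2$ (distance $1$ to each) by a short case analysis: if $w_1,w_2$ share a row $a$, take $y=(a,c)$ with $c\notin\{b_1,b_2\}$; if they share a column $b$, take $y=(a,b)$ with $a$ the third row; and if they differ in both coordinates, take $y=(a_1,b_2)$. In each case $y\notin D$ and $d(y,w_1)=d(y,w_2)=1$. Then $d(x,w_i)=2\neq 1=d(y,w_i)$ for $i=1,2$, so $x,y$ is a conflicting pair and $D$ is not a distance-equalizer set.

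The only genuinely delicate point is the lower bound: one must check that both the distance-$2$ vertex $x$ and the common neighbour $y$ exist and lie outside $D$ for \emph{every} choice of $w_1,w_2$, which is where the three position cases (same row, same column, general position) enter and where the hypotheses $n=3$ and $m\ge 3$ are used. The upper bound, by contrast, is immediate once the diameter-$2$ distance formula is in hand.
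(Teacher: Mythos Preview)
Your proof is correct and follows essentially the same approach as the paper: the upper bound uses the very same $K_3$-fibre $\{(1,1),(2,1),(3,1)\}$, and the lower bound proceeds by a short case analysis on an arbitrary two-element set $\{w_1,w_2\}$, producing an explicit conflicting pair in each case. The only cosmetic difference is that the paper splits the lower bound into two cases according to whether $y_a=y_b$ (and in the $y_a\neq y_b$ case builds a pair with distance profiles $(1,2)$ and $(2,1)$), whereas you use three cases and uniformly construct a vertex at distance $2$ from both $w_i$ together with a common neighbour at distance $1$ from both; both routes are equally short.
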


\begin{proof}
Consider the subset $S=V(K_3)\times \{1\}=\{(1,1),(2,1),(3,1)\}\subseteq V(K_3\square K_m)$.
Notice that for any pair of vertices $v,w\in V(K_3\square K_m)\setminus S$ it holds that $y_v\ne 1$ and $y_w\ne 1$, and  there exists $s\in S$ with $x_s\ne x_v$ and $x_s\ne x_w$. Thus,  $d(v,s)=2=d(w,s)$ and, as a result,  $S$ is a distance-equalizer set of $K_3\square K_m$. Therefore, $\xi(K_3\square K_m)\leq |S|=3$.

Now, we proceed to show that no subset of vertices of cardinality two forms a distance-equalizer set of $K_3\square K_m$. To this end, we consider an arbitrary  subset  $S'=\{a,b\}\subseteq V(K_3\square K_m)$. We differentiate  two cases.

If $y_a\neq y_b$, then we consider a pair of vertices $v,w\in V(K_3\square K_m)\setminus  S'$ such that $x_v=x_w\notin X_{S'}$, $y_v=y_a$ and $y_w=y_b$.
Hence, clearly $d(v,a)=1\neq 2=d(w,a)$ and $d(v,b)=2\neq1=d(w,b)$, which is not possible since $S'$ is a distance-equalizer set.

Now, if $y_a=y_b$, then we consider a pair of vertices $v,w\in V(K_3\square K_m)\setminus  S'$ such that $x_v=x_w\notin  X_{S'}$, $y_v=y_a$ and $y_w\neq y_b$. In such case, it holds $d(v,a)=1\neq 2=d(w,a)$ and $d(v,b)=1\neq 2=d(w,b)$. Thus, we again have a contradiction.

Therefore, $S'$ can not be a distance-equalizer set, which implies that  $\xi(K_3\square K_m)= 3$.
\end{proof}

\begin{proposition}
For any integer $m\ge 4$,
$$
\xi(K_4\square K_m)=4.
$$
\end{proposition}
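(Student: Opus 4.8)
My plan is to prove the two inequalities separately, using throughout the elementary fact that in $K_4\square K_m$ every distance lies in $\{0,1,2\}$: two distinct vertices are at distance $1$ when they agree in exactly one coordinate and at distance $2$ when they agree in neither. Consequently, for $v,w\notin S$, a vertex $s\in S$ \emph{equalizes} the pair precisely when $s$ agrees in exactly one coordinate with both of $v,w$, or with neither; equivalently, $s$ \emph{fails} to equalize $v,w$ exactly when it agrees in one coordinate with exactly one of them. This reformulation in terms of coordinate agreements is what I will exploit.

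For the upper bound I would take $S=V(K_4)\times\{1\}=\{(1,1),(2,1),(3,1),(4,1)\}$ and verify it is a distance-equalizer set. Any $v,w\notin S$ have second coordinate different from $1$. If $x_v\neq x_w$, I pick $s=(x_s,1)$ with $x_s\notin\{x_v,x_w\}$ (possible since $|X_S|=4$), so $s$ agrees with neither $v$ nor $w$ in any coordinate and $d(v,s)=d(w,s)=2$; if $x_v=x_w$, then $s=(x_v,1)$ agrees with each of $v,w$ only in the first coordinate, giving $d(v,s)=d(w,s)=1$. This yields $\xi(K_4\square K_m)\le 4$.

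For the lower bound I would suppose toward a contradiction that $S$ is a distance-equalizer set with $|S|=3$, and consider its projections $X_S\subseteq[4]$ and $Y_S\subseteq[m]$, each of size at most $3$. Since $|X_S|\le 3<4$ there is always a free row, and since $m\ge 4$ and $|Y_S|\le 3$ there is always a free column. If $|Y_S|\le 2$, I choose a free row $r$ and columns $c_1\neq c_2$ with $Y_S\subseteq\{c_1,c_2\}$, and set $v=(r,c_1)$, $w=(r,c_2)$; both lie outside $S$, and every $s\in S$ has first coordinate $\neq r$ while its second coordinate equals exactly one of $c_1,c_2$, so $s$ agrees in one coordinate with exactly one of $v,w$ and never equalizes the pair, a contradiction. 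The case $|X_S|\le 2$ is symmetric, using a free column instead.

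The remaining and essential case is $|X_S|=|Y_S|=3$, where no shared free coordinate is available and the easy argument breaks down; this is the hard part. Here the three vertices of $S$ use three distinct rows and three distinct columns, so by the vertex-transitivity of $K_4$ and $K_m$ (a relabeling of both coordinate sets is a graph automorphism) I may assume $S=\{(1,1),(2,2),(3,3)\}$. It then suffices to exhibit one pair outside $S$ whose distance patterns to $S$ are complementary: taking $v=(1,2)$ and $w=(4,3)$, the distance vectors to $\bigl((1,1),(2,2),(3,3)\bigr)$ are $(1,1,2)$ and $(2,2,1)$, which differ in every coordinate, so no $s\in S$ equalizes $v$ and $w$. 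This contradiction gives $\xi(K_4\square K_m)\ge 4$, and hence equality. The points I would take care to justify are the legitimacy of the relabeling and the verification that the constructed $v,w$ lie outside $S$ in each case.
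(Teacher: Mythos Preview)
Your proof is correct and follows essentially the same approach as the paper: the same set $S=[4]\times\{1\}$ for the upper bound, and a case analysis on the sizes of the projections $X_S,Y_S$ for the lower bound. Your version is somewhat more streamlined---you exploit the $X$/$Y$ symmetry to merge cases and invoke vertex-transitivity to normalize the $|X_S|=|Y_S|=3$ case to $S=\{(1,1),(2,2),(3,3)\}$---whereas the paper treats each subcase of $|Y_{S'}|\in\{1,2,3\}$ (and within the last, $|X_{S'}|\in\{1,2,3\}$) by hand without appealing to automorphisms.
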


\begin{proof}
Consider the subset $S=[4]\times \{1\}\subseteq V(K_4\square K_m)$.
Notice that for any pair of vertices $v,w\in V(K_4\square K_m)\setminus S$ we have $y_v\ne 1$ and $y_w\ne 1$, and  there exists $s\in S$ with $x_s\ne x_v$ and $x_s\ne x_w$. Thus,  $d(v,s)=2=d(w,s)$ and, as a result,  $S$ is a distance-equalizer set of  $K_4\square K_m$. Therefore,   $\xi(K_4\square K_m)\leq |S|=4$.

Now, we claim that no subset of vertices of cardinality three forms a distance-equalizer set of $K_4\square K_m$. To show this, we consider an arbitrary  subset  $S'=\{a,b,c\}\subseteq V(K_4\square K_m)$.  Without loss of generality, we differentiate the following cases which are associated to the possible value of $|Y_{S'}|$.

\medskip
\noindent Case 1.
$|Y_{S'}|=1$.
Two vertices $v,w\in V(K_4\square K_m)\setminus  S'$ such that $x_v=x_w\notin X_{S'},$  $y_v=y_a$ and $y_w\neq y_a$ are not equidistant to any vertex of $S'$, because $d(v,s)=1\neq2=d(w,s)$ for every $ s \in S'$.
Hence, in this case, $S'$ is not a distance-equalizer set.

\medskip
\noindent Case 2. $|Y_{S'}|=2$. We assume, without loss of generality, that
$y_a\neq y_b=y_c$. In this case,
two vertices $v,w\in V(K_4\square K_m)\setminus  S'$ such that $x_v=x_w\notin X_{S'}$, $y_v=y_a$ and $y_w=y_b$ are not equidistant to any vertex belonging to $S'$, owing to the fact that $d(v,a)=1\neq2=d(w,a)$ and $d(v,b)=d(v,c)=2\neq1=d(w,b)=d(w,c)$.
Hence,  $S'$ is not a distance-equalizer set either.

\medskip
\noindent Case 3.
$|Y_{S'}|=3$.
If $|X_{S'}|=1$, then we consider a pair of vertices $v,w\in V(K_4\square K_m)\setminus  S'$ such that $x_v\in X_{S'}$,  $x_w \not\in X_{S'} $ and $y_v, y_w\notin Y_{S'}$.
Then, $d(v,a)=d(v,b)=d(v,c)=1\neq2=d(w,a)=d(w,b)=d(w,c)$, so $v$ and $w$ are not equidistant to any vertex belonging to $S$.

Now, if  $|X_{S'}|=2$, we can assume, without loss of generality, that $x_a=x_b\neq x_c$ and consider a pair of vertices $v,w\in V(K_4\square K_m)\setminus  S'$ such that $x_v=x_a$, $y_v=y_c$, $x_w\notin\{x_a,x_c\}$ and $y_w\notin Y_{S'}$.
Hence, $d(v,a)=d(v,b)=d(v,c)=1\neq2=d(w,a)=d(w,b)=d(w,c)$, so $v$ and $w$ are not equidistant to any vertex belonging to  $S'$.
Finally, if $|X_{S'}|=3$, then we consider a pair of vertices $v,w\in V(K_4\square K_m)\setminus  S'$ such that $x_v=x_a$, $y_v=y_b$, $x_w=x_c$ and $y_w\notin Y_{S'}$.
Thus, $d(v,a)=d(v,b)=1\neq2=d(w,a)=d(w,b)$ and $d(v,c)=2\neq1=d(w,c)$.
Hence, again no vertex in $S'$ is equidistant to   $v$ and $w$.   Therefore,   $S'$ is not a distance-equalizer set.

\medskip
\noindent According to the three cases above, we conclude  that no subset of vertices of cardinality three forms a distance-equalizer set of $K_4\square K_m$ as claimed. Therefore,  $\xi(K_4\square K_m)=4$.
\end{proof}


\begin{proposition}
For any pair of integers  $m,n\ge 5$,
$$
\xi(K_n\square K_m)=5.
$$
\end{proposition}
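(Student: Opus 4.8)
The key structural fact is that $K_n\square K_m$ has diameter $2$: for two vertices $u,u'$ one has $d(u,u')=1$ exactly when they share one coordinate and $d(u,u')=2$ when they differ in both. Hence, for $v,w\notin S$ and $s\in S$, the values $d(v,s),d(w,s)$ lie in $\{1,2\}$, and $S$ \emph{fails} to equalize the pair $v,w$ precisely when, for every $s\in S$, exactly one of $v,w$ shares a coordinate with $s$ (so that their distance-$1$ patterns over $S$ are complementary). For the upper bound I would exhibit the diagonal $S=\{(i,i):i\in[5]\}$, which is well defined since $n,m\ge 5$. Given any $v,w$, the set $\{x_v,y_v,x_w,y_w\}$ meets $[5]$ in at most four values, so there is $i\in[5]$ with $i\notin\{x_v,y_v,x_w,y_w\}$; then $d(v,(i,i))=d(w,(i,i))=2$, and $(i,i)$ equalizes $v,w$. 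Thus $\xi(K_n\square K_m)\le 5$.

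For the lower bound I would first observe that any superset of a distance-equalizer set is again one, so it suffices to prove that no $4$-element set $S$ is a distance-equalizer set; for each such $S$ I would then produce a pair $v,w\notin S$ with complementary patterns in the sense above. Since $|S|=4$, we have $|X_S|,|Y_S|\le 4<5\le n,m$, so there always exist a free row $x^*\notin X_S$ and a free column $y^*\notin Y_S$. The argument splits according to $\min\{|X_S|,|Y_S|\}$. The regime $\min\{|X_S|,|Y_S|\}\le 2$ is handled uniformly: if $|Y_S|\le 2$, place both $v$ and $w$ in the free row $x^*$, taking the two columns of $Y_S$ when $|Y_S|=2$, and the unique column of $Y_S$ for $v$ together with the free column $y^*$ for $w$ when $|Y_S|=1$; in either case each $s\in S$ shares its column with exactly one of $v,w$. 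The case $|X_S|\le 2$ is identical after transposing rows and columns.

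The substantive part is the regime $|X_S|,|Y_S|\ge 3$ with $|S|=4$, giving the profiles $(|X_S|,|Y_S|)\in\{(4,4),(4,3),(3,4),(3,3)\}$, where $(4,3)$ and $(3,4)$ coincide up to swapping the two factors. For the permutation profile $(4,4)$, writing $S=\{(x_i,y_i):i\in[4]\}$ with distinct rows and distinct columns, the pair $v=(x_1,y_2)$, $w=(x_3,y_4)$ has complementary patterns $(1,1,2,2)$ and $(2,2,1,1)$ over $(s_1,s_2,s_3,s_4)$, and neither lies in $S$. For $(4,3)$, where two vertices share a column $y_1$, I would take $v=(x^*,y_1)$ (at distance $1$ from exactly those two vertices) together with a single vertex $w$ meeting only the two remaining points.

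The genuinely delicate case is $(3,3)$: here one row and one column are both doubled, and one must distinguish whether their intersection lies in $S$, yielding after relabeling the two shapes $\{(1,2),(1,3),(2,1),(3,1)\}$ and $\{(1,1),(1,2),(2,1),(3,3)\}$. The first shape splits cleanly via $v=(1,y^*)$ and $w=(x^*,1)$. \textbf{The ``corner'' shape $\{(1,1),(1,2),(2,1),(3,3)\}$ is the main obstacle}, because a naive row/column split would place $v$ or $w$ on the doubled corner $(1,1)\in S$; the remedy is the asymmetric choice $v=(x^*,1)$ and $w=(3,2)$, whose patterns $(1,2,1,2)$ and $(2,1,2,1)$ are complementary, with $v,w\notin S$. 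Confirming complementarity and $v,w\notin S$ in each configuration is then a short finite verification, which establishes that no $4$-set is a distance-equalizer set and hence $\xi(K_n\square K_m)=5$.
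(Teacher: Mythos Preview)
Your proof is correct. The lower bound follows the paper's strategy---exhaustive case analysis on $4$-element sets---but you organize it by the pair $(|X_S|,|Y_S|)$ and exploit the row/column symmetry explicitly, whereas the paper fixes $|Y_{S'}|\in\{1,2,3,4\}$ and then branches on $|X_{S'}|$; your ``complementary pattern'' framing is a nice way to see what each case must achieve, and your identification of the two $(3,3)$ shapes (corner in $S$ versus not) is exactly the right structural dichotomy. The upper bound, however, is genuinely different and cleaner than the paper's: the paper uses the ad hoc set $\{(1,1),(2,1),(3,1),(2,2),(3,3)\}$ and verifies three cases depending on whether $y_v$ or $y_w$ equals $1$, while your diagonal $\{(i,i):i\in[5]\}$ reduces the verification to a single pigeonhole step (at most four forbidden indices among five). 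The diagonal construction also makes transparent why the threshold is $5$, and would generalize immediately to higher-dimensional Hamming graphs.
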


\begin{proof}
Firstly, we prove that $\xi(K_n\square K_m)\leq 5$ by showing that the subset $$S=\{(1,1),(2,1),(3,1),(2,2),(3,3)\}$$   is a distance-equalizer set of $K_n\square K_m$. Let  $v,w\in V(K_n\square K_m)\setminus S$. We consider the cases for  $v=(x_v,y_v),w=(x_w,y_w)\not\in S$.
\vspace{3mm}

\noindent
Case 1.
$y_v=1$ and $y_w=1$.  In this case,  $v$ and $w$ are equidistant to  $s=(1,1)\in S$ , as $d(v,s)=1=d(w,s)$.

\medskip
\noindent Case 2.
$y_v\ne 1$ and $y_w\neq1$. In this case, $v$ and $w$ are equidistant to  $s=(x_s,1)\in S$ with $x_s\in \{1,2,3\}\setminus \{x_v,x_w\}$, as $d(v,s)=2=d(w,s)$.

\medskip
\noindent Case 3.
$y_v=1$ and $y_w\neq1$. If $x_w \in \{2,3 \}$, then $v$ and $w$ are equidistant to  $s=(x_w,1)\in S$ as $d(v,s)=1=d(w,s)$. Now, if $x_w \not \in \{2,3 \}$, then $v$ and $w$ are equidistant to $s=(j,j)\in S$, where $j\in \{2,3\}\setminus \{y_w\}$, as $d(v,s)=2=d(w,s)$.

\medskip
\noindent Thus, we conclude that $S$ is a distance-equalizer set of $K_n\square K_m$ and so $\xi(K_n\square K_m)\leq|S|=5$.

\medskip
Now, we claim that no subset of vertices of cardinality four forms a distance-equalizer set of $K_n\square K_m$. To this end, we consider an arbitrary  subset  $S'=\{a,b,c,d\}\subseteq V(K_n\square K_m)$.  Without loss of generality, we differentiate the following cases which are associated to the possible value of $|Y_{S'}|$.

\medskip
\noindent Case 1'. $|Y_{S'}|=1$.
Let us consider  two different vertices  $v,w\in V(K_n\square K_m)\setminus  S$ such that $x_v,x_w\notin X_{S'}$, $y_v=y_a$ and $y_w\neq y_a$. Since $y_a=y_b=y_c=y_d$, we deduce that $d(v,s)=1\neq2=d(w,s)$ for every $s\in S'$.

\medskip
\noindent Case 2'. $|Y_{S'}|=2$. Without loss of generality, we consider the following subcases.

\medskip
\noindent Subcase 2' (a). $y_a\ne y_b=y_c=y_d$.
The two vertices  $v,w\in V(K_n\square K_m)\setminus  S$ such that $x_v,x_w\notin X_{S'}$, $y_v=y_b$ and $y_w=y_a$ satisfy that $d(v,a)=2\ne 1=d(w,a)$ and that $d(v,b)=d(v,c)=d(v,d)=1\ne 2=d(w,b)=d(w,c)=d(w,d)$. Therefore,  $v$ and $w$ are not equidistant to any vertex belonging to  $S'$.

\medskip
\noindent Subcase 2' (b). $y_a= y_b\ne y_c=y_d$. In this situation we select $x_v,x_w\notin X_{S'}$, $y_v=y_b$ and $y_w=y_c$, to obtain that $d(v,a)=d(v,b)=1 \ne 2=d(w,b)=d(w,a)$ and $d(v,c)=d(v,d)=2\ne 1=d(w,d)=d(w,c)$. Thus, $v$ and $w$ are not equidistant to any vertex belonging to  $S'$.

\medskip
\noindent Case 3'. $|Y_{S'}|=3$.  Without loss of generality, we consider $y_a=y_d$. The subcases $|X_{S'}|=1$ and $X_{S'}|=2$ can be omitted, by symmetry with Cases 1' and 2', respectively. Thus, we restrict ourselves to the following subcases.

\medskip
\noindent Subcase 3' (a).  Without loss of generality, let $x_c=x_d$.
Consider the pair of vertices $v,w\in V(K_n\square K_m)\setminus  S$ such that $x_v=x_b$, $y_v=y_a$, $y_w=y_c$ and $x_w\notin X_{S'}$.
Hence, $d(v,a)=d(v,b)=d(v,d)=1\neq2=d(w,a)=d(w,b)=d(w,d)$ and $d(v,c)=2\neq1=d(w,c)$, and so, $v$ and $w$ are not equidistant to any vertex belonging to  $S'$.

\medskip
\noindent Subcase 3' (b). $|X_{S'}|=4$. We consider the vertices $v,w\in V(K_n\square K_m)\setminus  S$ such that $x_v=x_b$, $y_v=y_a$, $x_w=x_c$ and $y_w\notin Y_{s'}$. Thus,
$d(v,a)=d(v,b)=d(v,d)=1\ne 2= d(w,d)=d(w,b)=d(w,a)$ and $d(v,c)=2\ne 1 =d(w,c)$. Hence, $v$ and $w$ are not equidistant to any vertex belonging to  $S'$.

\medskip
\noindent Case 4'. $|Y_{S'}|=4$. The situations $|X_{S'}|=1$, $|X_{S'}|=2$ and  $|X_{S'}|=3$ can be omitted, by symmetry with Cases 1', Case 2' and Subcase 3' (b), respectively. Hence, if  $|X_{S'}|=4$, then we select two vertices $v,w\in V(K_n\square K_m)\setminus  S$ such that $x_v=x_a$, $y_v=y_b$, $x_w=x_c$ and $y_w=y_d$.
Thus $d(v,a)=d(v,b)=1\neq 2=d(w,a)=d(w,b)$ and $d(v,c)=d(v,d)=2\neq 1=d(w,c)=d(w,d)$. Therefore,  $v$ and $w$ are not equidistant to any vertex belonging to  $S'$.

According to all the conclusions obtained in the cases above, we deduce that no subset of vertices of cardinality four forms a distance-equalizer set of $K_n\square K_m$ as claimed. Therefore,  $\xi(K_n\square K_m)=5$.
\end{proof}

Once considered all the situations for the graphs $K_n\Box K_m$ for $n,m\ge 2$, from all the results of this section, we have the following theorem.

\begin{theorem}
For any pair of integers  $m,n\ge 2$,
$$
\xi(K_n\square K_m)=\left\{ \begin{array}{ll}
    m; & \mbox{if $n=2$}, \\
    \min\{m,n,5\}; & \mbox{if $\min\{m,n\}\ge 3$.}
\end{array} \right.
$$
\end{theorem}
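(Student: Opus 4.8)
The plan is to obtain this theorem purely as a synthesis of the four propositions already proved in this section, the only additional ingredient being the symmetry of the Cartesian product. First I would record that $K_n\square K_m\cong K_m\square K_n$, so that $\xi(K_n\square K_m)=\xi(K_m\square K_n)$ and the parameter is symmetric in its two arguments. This lets me assume without loss of generality that $n\le m$ and establish the claimed value in this range; the remaining orientation $n>m$ is then recovered by interchanging the two factors.

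Working under the assumption $n\le m$, I would split according to the value of $n=\min\{m,n\}$. If $n=2$, the proposition computing $\xi(K_2\square K_m)$ gives $\xi=m$, which is exactly the first branch of the formula. If $n\ge 3$, then $\min\{m,n\}=n\ge 3$, and since $n\le m$ one has $\min\{m,n,5\}=\min\{n,5\}$; it remains to check that the appropriate proposition delivers precisely this constant in each of the three subranges. For $n=3$ the proposition computing $\xi(K_3\square K_m)$ yields $\xi=3=\min\{n,5\}$; for $n=4$ the proposition computing $\xi(K_4\square K_m)$ yields $\xi=4=\min\{n,5\}$; and for $n\ge 5$, where $m\ge n\ge 5$, the proposition treating $K_n\square K_m$ with $m,n\ge 5$ yields $\xi=5=\min\{n,5\}$. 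Since the subranges $n=2$, $n=3$, $n=4$ and $n\ge 5$ exhaust all pairs with $2\le n\le m$, this settles the value whenever $n\le m$.

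Finally, for a pair with $n>m$ I would invoke commutativity once more: $\xi(K_n\square K_m)=\xi(K_m\square K_n)$, and the right-hand side has already been computed since $m\le n$. In particular this recovers the value $\xi(K_n\square K_2)=n$ for $n\ge 3$, which is the reflected form of the $n=2$ branch. I do not expect any genuine obstacle here: the mathematical content lies entirely in the four propositions, and the work of the theorem is bookkeeping. The only points requiring care are verifying that the hypotheses of each proposition (namely $m\ge 3$, $m\ge 4$, and $m,n\ge 5$, respectively) are met under the ordering $n\le m$, and confirming that $\min\{m,n,5\}$ collapses to the correct constant $3$, $4$ or $5$ in each regime.
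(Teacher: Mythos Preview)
Your proposal is correct and matches the paper's approach: the paper presents the theorem simply as a summary of the four preceding propositions, with no additional argument, and your write-up supplies exactly the routine bookkeeping (commutativity of $\square$ and the case split on $n=\min\{m,n\}$) that the paper leaves implicit.
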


\section{Hypercubes}\label{sec-hyper}

We shall use the (commonly known) binary notation of the vertices of $Q_n$, which consist on denoting each vertex $v\in V(Q_n)$ with a string of $n$ binary bits, i.e., $v=v_1v_2\dots v_i\dots v_n$, where $v_i\in\{0,1\}$ for every $i\in[n]$.
If two vertices have only one bit different, then they are adjacent in $Q_n$.
The hypercubes are bipartite graphs with bipartition sets $A$ and $B$, where $|A|=|B|$. Using this binary notation, it can be seen that
$$A=\left\{v\in V(Q_n):\sum_{i=1}^{n}v_i\equiv0\pmod{2
}\right\} \text{ and } B=\left\{v\in V(Q_n):\sum_{i=1}^{n}v_i\equiv1\pmod{2}\right\}$$
are the bipartition sets of $V(Q_n)$. For our purposes, the vertices of the subset $A$ will be called $even$, while the vertices of the subset $B$ will be called $odd$. It is commonly known that the distance between any two vertices of $Q_n$ is the number of bits where they differ. Also, notice that $Q_n$ is a $2$-antipodal graph, and we shall denote the antipodal vertex of any vertex $v\in V(Q_n)$ as $\Bar{v}$. Obviously, $\bar{\bar{v}}=v$ and $d(v,\Bar{v})=n$ for every $v\in V(Q_n)$. In addition, we may recall that $Q_n$ is indeed the Cartesian product of $n$ graphs isomorphic to $K_2$.

\begin{theorem}
\label{th-hyper}
Let $Q_n$ be the hypercube of dimension $n\ge 2$.
\begin{itemize}
\item[{\em (i)}] If $n\not\equiv0 \text{ mod }4$, then $\xi(Q_n)=2^{n-1}$.
\item[{\em (ii)}] If $n\equiv0 \text{ mod }4$, then $2^{n-1}\le\xi(Q_n)\le 2^{n-1}+2^{\frac{n}{2}-2}$
\end{itemize}
\end{theorem}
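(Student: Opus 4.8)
The plan is to use Proposition~\ref{PropoBipartiteUPC} for the common lower bound and then decide, for each residue of $n$ modulo $4$, whether a single partite set already equalizes all remaining pairs. Since $Q_n$ is bipartite with classes $A$ (even vertices) and $B$ (odd vertices) of size $2^{n-1}$ each, Proposition~\ref{PropoBipartiteUPC} immediately gives $\xi(Q_n)\ge 2^{n-1}$ in all cases, and moreover forces every distance-equalizer set to contain $A$ or $B$ entirely. By the coordinate-flip automorphism of $Q_n$ the two classes are interchangeable, so I may assume the set under study has the form $A\cup T$ with $T\subseteq B$, and the only pairs left to equalize are the distinct $x,y\in B\setminus T$.

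First I would isolate the key distance computation. Writing $P$ for the set of coordinates in which two odd vertices $x,y$ differ, we have $|P|=d(x,y)$, an even number, and for any $w$ the contributions to $d(w,x)$ and $d(w,y)$ coincide outside $P$. Hence $d(w,x)=d(w,y)$ holds exactly when $w$ agrees with $x$ on half of $P$ and with $y$ on the other half. When $x,y$ are not antipodal we have $|P|<n$, so there is a free coordinate outside $P$; choosing the balanced pattern on $P$ and adjusting the free coordinates to fix parity produces an even equalizer $w\in A$. Thus every non-antipodal pair is automatically equalized by $A$, and the whole problem collapses to the antipodal pairs $\{x,\bar x\}\subseteq B$, which exist only when $n$ is even (Remark~\ref{lemmahypercubes}).

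For such a pair an equalizer must satisfy $d(w,x)=d(w,\bar x)=n/2$. If $n/2$ is odd, i.e. $n\equiv 2\pmod 4$, then a vertex at distance $n/2$ from an odd vertex is even, so some $w\in A$ works; together with the case $n$ odd (no antipodal pairs at all) this shows $A$ is itself a distance-equalizer set, giving $\xi(Q_n)=2^{n-1}$ and proving (i). If $n\equiv 0\pmod 4$ then $n/2$ is even, so every vertex at distance $n/2$ from $x\in B$ lies in $B$; no vertex of $A$ can equalize an antipodal pair, $A$ (and by symmetry $B$) fails, and genuine augmentation beyond a single class is unavoidable. This establishes the lower bound in (ii).

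It remains to prove the upper bound $\xi(Q_n)\le 2^{n-1}+2^{\frac n2-2}$ in (ii) by exhibiting $T\subseteq B$ with $|T|=2^{\frac n2-2}$ for which $A\cup T$ equalizes every antipodal pair: each $\{x,\bar x\}$ must either meet $T$ or admit some $w\in T$ with $d(w,x)=n/2$. Phrased additively, writing $S_{n/2}$ for the set of weight-$(n/2)$ vectors, I need $T+S_{n/2}$ together with $T$ to cover all of $B$; since a vertex at distance $n/2$ from $x$ also sits at distance $n/2$ from $\bar x$, one vertex of $T$ handles a whole antipodal pair at once. The expected main obstacle is precisely the explicit choice of $T$: I would take $T$ to be an affine subspace of $B$ of dimension $\frac n2-2$ (equivalently, build it from the product decomposition $Q_n=Q_{n/2}\square Q_{n/2}$) and verify that its translates of the middle sphere cover $B$. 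Producing such a covering set of the exact size $2^{\frac n2-2}$ and checking the covering condition for every $x\in B$ is the technical heart of the argument; the counting estimate $2^{\frac n2-2}\bigl(\tfrac12\binom{n}{n/2}+1\bigr)\ge 2^{n-2}$, where $2^{n-2}$ is the number of antipodal pairs in $B$, shows the target size is at least plausible, but the verification must be constructive rather than probabilistic to yield the stated bound.
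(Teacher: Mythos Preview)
Your treatment of the lower bound and of part (i) is correct and is essentially the paper's argument with the roles of $A$ and $B$ interchanged: you reduce to antipodal pairs by handling all non-antipodal pairs with a balanced midpoint plus a parity-adjusting free coordinate, and then observe that antipodal pairs in one class exist only when $n$ is even and are equalized inside the other class precisely when $n/2$ is odd. This is exactly what the paper does.

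The genuine gap is the upper bound in (ii). You correctly reformulate what is needed---a set $T\subseteq B$ of size $2^{n/2-2}$ such that every antipodal pair in $B$ either meets $T$ or has a vertex of $T$ at distance $n/2$---and you gesture at the product decomposition $Q_n=Q_{n/2}\square Q_{n/2}$ and at ``an affine subspace of dimension $n/2-2$''. But you neither specify which affine subspace nor verify the covering; you yourself flag that ``the verification must be constructive'' and leave it undone. Not every affine subspace of that dimension works, so this step cannot be waved through. The paper closes the gap with an explicit choice: it takes the set $A^*$ of even vertices whose first coordinate is $0$ and whose last $n/2$ coordinates are all $0$ (so $|A^*|=2^{n/2-2}$), and then, for an arbitrary antipodal pair $\{v,\bar v\}\subseteq A\setminus A^*$, it builds by hand a vertex $z\in A^*$ with $d(v,z)=n/2$ by a short case analysis on $k_v$, the larger of the weights of the second halves of $v$ and $\bar v$. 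That concrete construction and its three-case verification are exactly the missing piece in your proposal.
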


\begin{proof}
Firstly, by Proposition~\ref{PropoBipartiteUPC}, we deduce the lower bound $\xi(Q_n)\ge \min\{|A|, |B|\}=|A|=|B|=2^{n-1}$, which is valid for any integer $n\ge 2$. We next differentiate two cases depending on the congruency of $n$ modulo $4$.

\vspace{3mm}
\noindent Case 1. $n\not\equiv0\pmod4$. To obtain the required equality, we need to prove the upper bound $\xi(Q_n)\le 2^{n-1}$. To this end, we next show that $B$ is a distance-equalizer set of $Q_n$.
Let  $v,w\in A$ be two different vertices. First, we consider the case $d(v,w)=d<n$. Notice that $d$ is even, by Remark~\ref{lemmahypercubes}. Let $\{i_1,\dots,i_d\}$ be the set of indices such that $v_i\neq w_i$ for every $ i\in\{i_1,\dots,i_d\}$. Hence, we consider the vertex $z=z_1\dots z_i\dots z_n$, defined as follows: $z_i=v_i=w_i$ for every $ i\notin\{i_1,\dots,i_d\}$,
$z_i=v_i$ for every $ i\in\{i_1,\dots,i_{\frac{d}{2}}\}$ and $z_i=w_i$ for every $ i\in\{i_{\frac{d}{2}+1},\dots,i_d\}$. Notice that such a vertex exists due to the parity of $d$. Also, it is readily seen that $v$ and $w$ are equidistant to $z$, as $d(v,z)=\frac{d}{2}=d(w,z)$. 
Thus, if  $d\equiv 2\pmod 4$, then  $\frac{d}{2}$ is odd, and so Remark~\ref{lemmahypercubes} leads to $z\in B$, as required.

Now, if $d\equiv 0\pmod 4$, then it holds that $z\notin B$ (that is $z\in A$).
However,  since   $d(v,w)=d<n$, there exists at least one index $j\in\{1,\dots,n\}\setminus \{i_1,\dots,i_d\}$. We then consider the vertex  $z^*$ for which $z^*_i=z_i$ for every  $ i\in\{1,\dots,n\}\setminus \{ j\}$ and $z^*_j=(z_j+1)\pmod 2$. Hence, it happens that $d(v,z^*)=d(v,z)+1=d(w,z)+1=d(w,z^*)$, and that $z^*\in B$, which is equidistant to $v,w$, as required.

\smallskip
On the other hand, assume $v,w\in A$ satisfy that $d(v,w)=n$.
In this case,  $v$ and $w$ are equidistant to  $u=v_1\dots v_{\frac{n}{2}}w_{\frac{n}{2}+1}\dots w_n$, as $d(v,u)=\frac{n}{2}=d(w,u)$.
Moreover, since $n$ is even and  $n\not \equiv0\pmod 4$, it must occur that  $n\equiv2\pmod4$. Therefore, $d(v,u)=\frac{n}{2}$ is odd; and so, by Remark \ref{lemmahypercubes}, we conclude that $u\in B$, as required.

In summary, we have have proved that, if $n\not\equiv0\pmod4$, then $B$ is a distance-equalizer set of $Q_n$ and, as a consequence, $\xi(Q_n)\leq |B|\le 2^{n-1}$, which completes the proof of the equality for $n\not\equiv0\pmod4$.

\medskip
\noindent Case 2.
$n\equiv0\pmod4$. We first recall that $Q_n=Q_{\frac{n}{2}}\Box Q_{\frac{n}{2}}$. Let $G_1$ be the subgraph of $Q_n$ induced by the set $V(G_1)=\{0,1\}^{\frac{n}{2}}\times \{0\}^{\frac{n}{2}}$ which is indeed isomorphic to  $Q_{\frac{n}{2}}$.
Let $A^*\subseteq A\cap V(G_1)$ be the subset of  even vertices of $Q_n$ which are in the subgraph $G_1$, and whose first bit is equal to zero. That is,  a vertex $z=z_1z_2\dots z_n\in A^*$ whenever $z\in A$ and $z_i=0$ for every $i\in \{1\}\cup \{\frac{n}{2}+1, \dots, n\}$. Notice that $|A^*|=2^{\frac{n}{2}-2}$. We claim that $B\cup A^*$ is a distance-equalizer set of $Q_n$. To this end, we first observe that if $v,w\notin B\cup A^*$ are two vertices with $d(v,w)<n$, then, by using some similar arguments as the ones used in Case 1, there exists a vertex $z\in B$ such that $d(v,z)=d(w,z)$, and we are done with this situation. Hence, it only remains to consider the case $d(v,w)= n$ (i.e., vertices that are antipodal in $Q_n$ that are not in $B\cup A^*$).

We proceed to show that for every $v\in A$ such that $\{v,\bar{v}\} \cap  A^*=\varnothing $,
there exists $z\in A^*$ such that $d(v,z)=\frac{n}{2}=d(\bar{v},z)$.
To this end, we define  $k_v$ as the maximum number of bits equal to one, among the second half of bits of $v$ and $\bar{v}$, i.e.,
$$k_v=\max \left\{\sum_{i=\frac{n}{2}+1}^n v_i,   \sum_{i=\frac{n}{2}+1}^n\left((v_i+1)\hspace{-0.4cm}\pmod2\right)\right\}.$$
 We can assume, without loss of generality, that  the maximum is reached at $v$. That is, $k_v=\sum_{i=\frac{n}{2}+1}^n v_i$. We differentiate the following three subcases.

\medskip
\noindent Subcase 2.1. $k_v=\frac{n}{2}$. Notice that  $v_1=0$, as $\bar{v} \not\in  A^*$. We  take $z\in A^* $ defined as $z_i=v_i$  for every   $i\in [\frac{n}{2}]$ while $z_i=0$ for every $i\in  \{\frac{n}{2}+1, \dots, n\}$. Thus,  $d(v,z)=\frac{n}{2}=d(\bar{v},z)$.

\medskip
\noindent Subcase 2.2. $k_v<\frac{n}{2}$ and $v_1=0$.
We define  $z_i=v_i$ for every $i\in [k_v]$, $z_i=\mode{v_i+1}_2$ for every $i\in \{k_v+1,\dots, \frac{n}{2}\}$, and  $z_i=0$ for every $i\in \{\frac{n}{2}+1, \dots, n\}$. Hence, again $d(v,z)=\frac{n}{2}=d(\bar{v},z)$ and $z\in A^* $.

\medskip
\noindent Subcase 2.3. $k_v<\frac{n}{2}$ and $v_1=1$. We take  $z_i=\mode{v_i+1}_2$ for every $i\in [\frac{n}{2}-k_v]$, $z_i=v_i$ for every $i\in \{\frac{n}{2}-k_v+1,\dots ,  \frac{n}{2}\}$, and  $z_i=0$ for every $i\in \{\frac{n}{2}+1, \dots, n\}$. As above,  $z\in A^* $ and $d(v,z)=\frac{n}{2}=d(\bar{v},z)$.

Therefore, $B\cup A^*$ is a distance-equalizer set of $Q_n$, which implies that $$\xi(Q_n)=|S|=|B|+|A\cap S|\le |B| +|A^*|=2^{n-1}+ 2^{\frac{n}{2}-2},$$
and so, the upper bound is proved.
\end{proof}

\section{Prism of a cycle}

Given a graph $G$, the \textit{prism graph} of $G$ is known to be the Cartesian product of $G$ and $K_2$. In this section we consider the case when $G$ is a cycle $C_n$. We shall present the proof of our result in a sequence of several propositions that consider different situations with respect to $n$.

\begin{proposition}
For any odd integer $n\ge 3$, $$\xi(C_n\square K_2)=n.$$
\end{proposition}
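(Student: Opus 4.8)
The plan is to prove the two inequalities $\xi(C_n\square K_2)\le n$ and $\xi(C_n\square K_2)\ge n$ separately. I write $V(K_2)=\{1,2\}$ and use the standard Cartesian-product distance $d((x,y),(x',y'))=d_{C_n}(x,x')+d_{K_2}(y,y')$ with $d_{K_2}(y,y')\in\{0,1\}$.

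For the lower bound, the decisive observation is that the two endpoints of a single ``rung'' can never be equalized. Fix $x\in V(C_n)$ and look at the pair $(x,1),(x,2)$. For any vertex $w=(x_w,y_w)$ one has $d((x,1),w)-d((x,2),w)=d_{K_2}(1,y_w)-d_{K_2}(2,y_w)=\pm1$, since exactly one of $1,2$ coincides with $y_w$. Thus no vertex of the prism is equidistant to $(x,1)$ and $(x,2)$, so any distance-equalizer set $D$ must contain at least one of them. As the $n$ rungs $\{(x,1),(x,2)\}$ partition $V(C_n\square K_2)$, the set $D$ meets $n$ pairwise disjoint rungs and therefore $|D|\ge n$. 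I note that this argument does not use the parity of $n$; it is exactly the universal lower bound $\xi(C_n\square K_2)\ge n$ that the later propositions of this section will reuse.

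For the upper bound I would exhibit the explicit distance-equalizer set $S=\{(x,1):x\in V(C_n)\}$, a single copy of the cycle. Every vertex outside $S$ has the form $(x,2)$, so it suffices to equalize an arbitrary pair $(a,2),(b,2)$ with $a\ne b$. Since $d((a,2),(i,1))=d_{C_n}(a,i)+1$, this reduces to finding $i\in V(C_n)$ with $d_{C_n}(a,i)=d_{C_n}(b,i)$, after which $(i,1)\in S$ equalizes the pair. The essential point is the classical fact that in an odd cycle every pair of distinct vertices has a common equidistant vertex: the two arcs between $a$ and $b$ have lengths $k$ and $n-k$ of opposite parity because $n$ is odd, and the midpoint of the arc whose length is even is at equal graph-distance from both endpoints. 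Verifying that this midpoint really realizes the graph distance (and covering both the short- and long-arc cases) is the one place where a small computation is needed, and it is precisely where the hypothesis that $n$ is odd is used; for even cycles adjacent vertices admit no equidistant vertex, which foreshadows why the remaining residue classes of $n$ will require strictly more than $n$ vertices.

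Putting the two bounds together yields $\xi(C_n\square K_2)=n$. I expect the main obstacle to be not the lower bound, which is immediate once the rung observation is made, but the case analysis establishing the odd-cycle equidistance property cleanly for all pairs, including near-antipodal ones.
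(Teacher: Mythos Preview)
Your proof is correct and follows essentially the same route as the paper: one full layer of the prism serves as a distance-equalizer set (the equalization reducing to the odd-cycle equidistance property, which the paper writes out via the explicit midpoint formulas $\frac{x_v+x_w}{2}$ and $\frac{x_v+x_w+n}{2}\pmod n$), and the rung observation that $(x,1)$ and $(x,2)$ can never be equalized forces $|D|\ge n$. One small correction to your closing aside: for $n\equiv 0\pmod 4$ the paper still obtains $\xi(C_n\square K_2)=n$ by using a bipartition class rather than a layer, so only the class $n\equiv 2\pmod 4$ actually requires strictly more than $n$ vertices.
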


\begin{proof}
Let $S_i=[n]\times \{i\}$, with $i\in [2]$. We show that $S_2$ is a distance-equalizer set of $C_n\square K_2$. To this end, let $x,w\in S_1$ be two arbitrary vertices. If $x_v+x_w\equiv 0 \pmod  2$, then for $s=(\frac{x_v+x_w}{2},2)\in S_2$ it holds that $d(x,s)=\frac{x_v+x_w}{2}+1=d(w,s)$. Now, since $n$ is odd, if  $x_v+x_w\equiv 1 \pmod  2$, then for $x_s=\frac{ x_v+x_w+n}{2}  \pmod  n$ it follows that $s=(x_s,2)\in S_2$ and that $d(x,s)=x_s+1=d(w,s)$. Hence, $S_2$ is a distance-equalizer set of $C_n\square K_2$, and so, $\xi(C_n\square K_2)\le |S_2|=n.$

It remains to show that $\xi(C_n\square K_2)\ge n.$
Suppose, to the contrary, that there exists a distance-equalizer set $S\subseteq V(C_n\square K_2)$ such that $|S|\le n-1$. In such a case, there exist two different vertices $v,w\in V(C_n\square K_2)\setminus S$ such that $x_v=x_w$. Notice that $y_v\ne y_w$. Thus, there exists $s=(x_s,y_s)\in S$ such that $d(v,s)=d(w,s)$.
Hence,
\begin{align*}
d_{K_2}(y_v,y_s)&=d(v,s)-d_{C_n}(x_v,x_s)\\
							&=d(w,s)-d_{C_n}(x_w,x_s)\\
							&=d_{K_2}(y_w,y_s),
\end{align*}
which is a contradiction, as $y_v,y_w,y_s\in \{1,2\}$ and $y_v\ne y_w$. Therefore,
$\xi(C_n\square K_2)\ge n$ and, as a consequence, the result follows.
\end{proof}

In order to prove the remaining cases for the prism $C_n\Box K_2$, we need the following technical lemmas.

\begin{lemma}\label{L2}
Let $n$ be an even integer. Let $B=\{v\in V(C_n\square K_2):\, \, x_v+y_v\equiv1\pmod{2}\}$ and $A=V(C_n\square K_2)\backslash B$.
If $ v,w\in A$ with $d_{C_n}(x_v,x_w)\equiv 0\pmod{2}$, then there exists $ s\in B$ such that $d(v,s)=d(w,s)$.
\end{lemma}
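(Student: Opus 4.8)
The plan is to exploit the bipartite structure of the prism to first pin down the relation between $v$ and $w$, and then reduce the whole problem to finding a ``midpoint'' on the cycle $C_n$. Since $C_n\square K_2$ is a connected bipartite graph with partite sets $A$ and $B$ (indeed $(x,y)$ lies in the class of $(1,1)$ exactly when $x+y$ is even, so $A$ and $B$ really are the bipartition classes), Remark~\ref{lemmahypercubes} guarantees that $d(v,w)$ is even whenever $v,w\in A$. Writing $d(v,w)=d_{C_n}(x_v,x_w)+d_{K_2}(y_v,y_w)$ and invoking the hypothesis $d_{C_n}(x_v,x_w)\equiv 0\pmod 2$, I would conclude that $d_{K_2}(y_v,y_w)$ is even, hence equal to $0$, so that $y_v=y_w$. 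Since $n$ is even, the evenness of $d_{C_n}(x_v,x_w)$ also yields $x_v\equiv x_w\pmod 2$.

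Next I would reduce the statement to a purely cyclic one. Because $y_v=y_w$, for every candidate vertex $s=(x_s,y_s)$ the two mixed terms coincide, $d_{K_2}(y_v,y_s)=d_{K_2}(y_w,y_s)$, so that $d(v,s)-d(w,s)=d_{C_n}(x_v,x_s)-d_{C_n}(x_w,x_s)$. Thus it suffices to produce an index $x_s\in[n]$ equidistant in $C_n$ from $x_v$ and $x_w$; once such an index is found, I may freely choose $y_s\in\{1,2\}$ of parity opposite to $x_s$ so that $x_s+y_s$ is odd, which places $s$ in $B$ as required.

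For the cyclic midpoint I would set $x_s=\frac{x_v+x_w}{2}$, which is a genuine element of $[n]$ since $x_v\equiv x_w\pmod 2$ and $1\le\frac{x_v+x_w}{2}\le n$. The key observation is that $|x_v-x_s|=|x_w-x_s|=\frac{1}{2}|x_v-x_w|$, and as $d_{C_n}(a,b)=\min\{|a-b|,\,n-|a-b|\}$ depends only on the absolute difference, equal absolute differences force $d_{C_n}(x_v,x_s)=d_{C_n}(x_w,x_s)$. Combining this with the choice of $y_s$ gives $d(v,s)=d(w,s)$ with $s\in B$, finishing the argument. The degenerate possibility $x_v=x_w$ forces $v=w$ through $y_v=y_w$, and is then trivial since $B\neq\varnothing$.

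The computations here are routine; the only point demanding care is the parity bookkeeping of the first step, namely extracting $y_v=y_w$ from the evenness of $d_{C_n}(x_v,x_w)$ via the bipartiteness of the prism. Once that collapse onto a single $C_n$-layer is in hand, the cyclic midpoint together with the free choice of the $K_2$-coordinate completes the proof without further obstruction.
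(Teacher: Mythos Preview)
Your proof is correct and follows essentially the same route as the paper: deduce $y_v=y_w$ from the parity hypotheses, take the cyclic midpoint $x_s=\frac{x_v+x_w}{2}$, and pick $y_s$ of opposite parity so that $s\in B$. Your write-up is in fact a bit more careful than the paper's, since you spell out via Remark~\ref{lemmahypercubes} why $y_v=y_w$ holds, whereas the paper simply asserts it.
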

\begin{proof}
Given a pair of vertices $v,w\in A$ such that $d_{C_n}(x_v,x_w)\equiv 0\pmod{2}$,
we have $y_v=y_w$ and $s=\left(\frac{x_v+x_w}{2},\mode{\frac{x_v+x_w}{2}}_2+1\right)\in B$.
Hence,
\begin{align*}
d(v,s)&=d_{C_n}(x_v,x_s)+d_{K_2}(y_v,y_s)\\
		&=d_{C_n}\left(x_v,\frac{x_v+x_w}{2}\right)+d_{K_2}(y_v,y_s)\\
  	&=d_{C_n}\left(x_w,\frac{x_v+x_w}{2} \right)+d_{K_2}(y_w,y_s)\\
&=d_{C_n}(x_w,x_s)+d_{K_2}(y_w,y_s)\\
&=d(w,s).
\end{align*}
Therefore, the result follows.
\end{proof}

\begin{lemma}\label{L3}
Let $n$ be an even integer. Let $B=\{v\in V(C_n\square K_2):\, \, x_v+y_v\equiv1\pmod{2}\}$ and $A=V(C_n\square K_2)\backslash B$.
If  $ v,w\in A$ with $d_{C_n}(x_v,x_w)\equiv 1\pmod{4}$, then there exists $ s\in B$ such that $d(v,s)=d(w,s)$.
\end{lemma}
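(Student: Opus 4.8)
The plan is to use the single extra unit of distance available in the $K_2$-factor to offset the fact that, since $d:=d_{C_n}(x_v,x_w)\equiv1\pmod4$ is odd, the shortest $x_v$--$x_w$ path in $C_n$ has no vertex exactly at its midpoint. First I would record that $d$ odd forces $x_v\not\equiv x_w\pmod 2$; since $v,w\in A$ means $x_v\equiv y_v$ and $x_w\equiv y_w\pmod 2$, this yields $y_v\neq y_w$, so $\{y_v,y_w\}=\{1,2\}$ and $d_{K_2}(y_v,y_w)=1$. This is the structural input that makes the one-step correction possible.

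For the equalizing vertex, I would take $p$ to be a vertex of $C_n$ lying on a shortest $x_v$--$x_w$ path with $d_{C_n}(x_v,p)=\frac{d-1}{2}$ and $d_{C_n}(x_w,p)=\frac{d+1}{2}$ (such a vertex exists on any shortest path of length $d$), and set $s=(p,y_w)$. A direct computation then gives
$$d(v,s)=d_{C_n}(x_v,p)+d_{K_2}(y_v,y_w)=\frac{d-1}{2}+1=\frac{d+1}{2}$$
and
$$d(w,s)=d_{C_n}(x_w,p)+d_{K_2}(y_w,y_w)=\frac{d+1}{2}+0=\frac{d+1}{2},$$
so $v$ and $w$ are equidistant to $s$. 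Note this already works in the degenerate case $d=1$, where $p=x_v$.

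The remaining point, and the place where the hypothesis $d\equiv1\pmod4$ is genuinely needed (not merely $d$ odd), is to check that $s\in B$, i.e.\ that $p+y_w$ is odd. Since $p$ is reached from $x_v$ in $\frac{d-1}{2}$ steps along the cycle and $n$ is even (so the wraparound edge $\{n,1\}$ also joins vertices of opposite parity), one has $p\equiv x_v+\frac{d-1}{2}\pmod 2$; the congruence $d\equiv1\pmod4$ makes $\frac{d-1}{2}$ even, whence $p\equiv x_v\pmod 2$. Combining $x_v\equiv y_v$ (from $v\in A$) with $y_w\equiv y_v+1\pmod 2$ (from $y_w\neq y_v$) gives $p+y_w\equiv x_v+y_v+1\equiv1\pmod 2$, so $s\in B$. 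I expect the only delicate steps to be the parity bookkeeping in this last paragraph and, if one wants full rigor, a brief treatment of the wraparound in locating $p$ on the cycle; the residue modulo $4$ is precisely what places $s$ in $B$ rather than in $A$, which is why the complementary case $d\equiv3\pmod4$ must be handled by a separate construction.
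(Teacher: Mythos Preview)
Your proof is correct and follows essentially the same approach as the paper: both select the vertex $s=(p,y_w)$ where $p$ is the point on a shortest $x_v$--$x_w$ path at distance $\frac{d-1}{2}$ from $x_v$, then use $d\equiv 1\pmod 4$ to verify $s\in B$. Your presentation is in fact slightly more careful than the paper's, since you work with an abstract shortest path (avoiding an implicit relabeling assumption) and spell out the parity bookkeeping in detail.
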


\begin{proof}
Given a pair of vertices $v,w\in A$ such that $d_{C_n}(x_v,x_w)\equiv 1\pmod{4}$,
we have that $y_v\ne y_w$ and $x_v\not\equiv y_w\pmod{2}$.  Without loss of generality, we can assume that $x_v<x_w$. Notice also that $\frac{x_w-x_v-1}{2}\equiv 0 \pmod 2$. Thus, $s=\left(x_v+\frac{x_w-x_v-1}{2},y_w  \right)\in B$ and
\begin{align*}
d(s,v)&=d_{C_n}(x_s,x_v)+d_{K_2}(y_s,y_v)\\
			&=\frac{x_w-x_v-1}{2}+1\\
			&=x_w-x_s \\
			&=d_{C_n}(x_s,x_w)+d_{K_2}(y_s,y_w)\\
			&=d(s,w).
\end{align*}
Therefore, the result follows.
\end{proof}

\begin{proposition}\label{EQDIM_CkP2_k_0_mod4}
Let $n\ge 4$ be an integer. If $ n\equiv0\pmod{4}$, then
$$
\xi(C_n\square K_2)=n.
$$
\end{proposition}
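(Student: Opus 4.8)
The plan is to prove the two inequalities separately, reusing the bipartite structure and Lemmas \ref{L2} and \ref{L3}. For the lower bound, note that $C_n\square K_2$ is bipartite with partite classes $A$ and $B$ exactly as defined in Lemmas \ref{L2} and \ref{L3}, and that $|A|=|B|=n$: for each of the $n$ choices of first coordinate, precisely one of the two values of the second coordinate makes $x_v+y_v$ odd. Proposition \ref{PropoBipartiteUPC} then yields immediately $\xi(C_n\square K_2)\ge\min\{|A|,|B|\}=n$, with no use of the congruence hypothesis.

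For the matching upper bound I would show that $B$ is itself a distance-equalizer set, whence $\xi(C_n\square K_2)\le|B|=n$. Take two distinct vertices $v,w\notin B$, i.e.\ $v,w\in A$. Since the graph is bipartite and $v,w$ lie in the same class, Remark \ref{lemmahypercubes} gives that $d(v,w)=d_{C_n}(x_v,x_w)+d_{K_2}(y_v,y_w)$ is even, and I would then split according to $d_{C_n}(x_v,x_w)$ modulo $4$. If $d_{C_n}(x_v,x_w)$ is even, Lemma \ref{L2} produces the desired $s\in B$; if $d_{C_n}(x_v,x_w)\equiv1\pmod4$, Lemma \ref{L3} does. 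The only remaining possibility is $d_{C_n}(x_v,x_w)\equiv3\pmod4$, and this is precisely where the hypothesis $n\equiv0\pmod4$ must be used.

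The hard part will be this last case, because the midpoint construction of Lemma \ref{L3} places the candidate vertex in $A$ rather than $B$ once the geodesic length is $\equiv3\pmod4$ (there $\frac{x_w-x_v-1}{2}$ is odd, flipping the parity that would put $s$ in $B$). The idea I would exploit is that, when $n\equiv0\pmod4$, the complementary arc of $C_n$ joining $x_v$ and $x_w$ has length $n-d_{C_n}(x_v,x_w)\equiv1\pmod4$. I would therefore carry out the construction of Lemma \ref{L3} along this \emph{long} arc: place $s$ on it at cycle-distance $m=\frac{n-d_{C_n}(x_v,x_w)-1}{2}$ from $x_w$ (an even integer) and set $y_s=y_v$, so that, after accounting for the single $K_2$-step separating $y_w$ from $y_v$, one gets $d(v,s)=\bigl(n-d_{C_n}(x_v,x_w)\bigr)-m=m+1=d(w,s)$. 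Since both arc-distances from $s$ are at most $n/2$, these are genuine geodesic distances, and a short parity check (using $x_v+x_w\equiv d_{C_n}(x_v,x_w)\pmod2$ together with $y_v\equiv x_v\pmod2$) confirms that $x_s+y_s$ is odd, i.e.\ $s\in B$. This settles the final case, showing $B$ is a distance-equalizer set and completing the equality $\xi(C_n\square K_2)=n$.
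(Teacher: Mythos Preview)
Your proof is correct and follows the same strategy as the paper: the lower bound via Proposition~\ref{PropoBipartiteUPC}, and the upper bound by showing that $B$ itself is a distance-equalizer set, invoking Lemmas~\ref{L2} and~\ref{L3} for the easy residues and giving a separate argument for the one remaining case. The only difference is organizational: the paper fixes $v=(1,1)$ by vertex-transitivity, cases on $x_w\pmod 4$, and writes down the explicit equalizer $s=\bigl(\tfrac{n+x_w}{2},1\bigr)$ when $x_w\equiv 0\pmod 4$, whereas you case directly on $d_{C_n}(x_v,x_w)\pmod 4$ and treat $d\equiv 3$ by running the Lemma~\ref{L3} construction along the complementary arc of length $n-d\equiv 1\pmod 4$; once unwound, the two constructions coincide.
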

\begin{proof}
Since $C_n\square K_2$ is a regular and bipartite graph, the partite sets have cardinality $n$. Hence, by Proposition~\ref{PropoBipartiteUPC} we deduce that $\xi(C_n\square K_2)\ge n .$

It remains to show that $\xi(C_n\square K_2)\le n$ is satisfied.
To this end, we claim that the set of vertices $S=\left\{s \in V(C_n\square K_2):\, \, x_s+y_s \equiv 1 \pmod{2}\right\}$ is a distance-equalizer set.
By the symmetry of $C_n\square K_2$, we fix an arbitrary vertex $v\in V(C_n\square K_2)\backslash S$ and then prove that for any $  w\in V(C_n\square K_2)\backslash\{S\cup\{v\}\}$ there exists a vertex $s\in S$ such that $d(v,s)=d(w,s)$.

Without loss of generality, we consider $v=(1,1)$ and differentiate the following cases according to the possibilities for $x_w$.

\medskip
\noindent Case 1.
$x_w\equiv 0\pmod{4}$.
Consider the vertex $s=\left(\frac{n+x_w}{2},1\right)$.
Notice that $ s\in S$, as $x_s=\frac{n}{2}+\frac{x_w}{2}\equiv0\pmod{2}$ and $y_s=1$.
Moreover,
\begin{align*}
d(v,s)&=  d_{C_n}(x_v,x_s)+d_{K_2}(y_v,y_s)\\
&=\min\{x_s-1,n-(x_s-1)\}\\
&=\frac{n-x_w+2}{2}\\
		&=\min\left\{\frac{n-x_w}{2}, \frac{n+x_w}{2}  \right\}+1\\
		&=\min\left\{x_s-x_w, n-(x_s-x_w)  \right\}+1\\
		&= d_{C_n}(x_w,x_s)+d_{K_2}(y_w,y_s)\\
		&=d(w,s).
\end{align*}

\noindent Case 2.
$x_w\equiv 2\pmod{4}$.
In this situation,  $d_{C_n}(x_v,x_w)\equiv1\pmod{4}$ and by Lemma~\ref{L3} we have that there exists $ s\in S$ such that $d(v,s)=d(w,s)$.

\medskip
\noindent Case 3.
$x_w\equiv 1\pmod{2}$.
Hence,  $d_{C_n}(x_v,x_w)\equiv0\pmod{2}$ and by Lemma~\ref{L2}  there exists  $ s\in S$ such that $d(v,s)=d(w,s)$.

\medskip
\noindent According to the conclusions from the cases above, we deduce that $S$ is a distance-equalizer set of $ C_n\square K_2$, and so, $\xi(C_n\square K_2)\leq n$, which leads to the desired equality.
\end{proof}

From now on, we consider the case of $C_n\square K_2$ with $n\equiv2\pmod{4}$, where we also need some extra terminology and technical lemmas. Given two vertices $v,w$ of a graph $G$, the \textit{bisector} of $v$ and $w$ is denoted by $B_{v|w}$, i.e.,
$$B_{v|w}=\{x\in V(G): \, \, d(v,x)=d(w,x)\}.$$

\begin{lemma}\label{L1}
Let $n\ge 6$ be an  integer such that $n\equiv 2 \pmod 4$. Let $B=\{v\in V(C_n\square K_2):\, \, x_v+y_v\equiv1\pmod{2}\}$ and $A=V(C_n\square K_2)\backslash B$.
If $v,w\in A$  with $d_{C_n}(x_v,x_w)\equiv 3\pmod{4}$, then $B_{v|w}\subseteq A$.
\end{lemma}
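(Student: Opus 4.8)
The plan is to compute the bisector $B_{v|w}$ explicitly and then track the parities of its members. By the vertex-transitivity of $C_n$, together with the reflection of the cycle and the transposition of the two $K_2$-fibres—all of which are automorphisms of $C_n\square K_2$ that preserve the bipartition $\{A,B\}$—I may assume without loss of generality that $v=(0,2)$ and $w=(d,1)$, where $d:=d_{C_n}(x_v,x_w)\equiv 3\pmod 4$ and $0<d\le n/2$. Note that $d$ is odd, which is exactly what forces $y_v\ne y_w$ and is consistent with $v,w\in A$.

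Writing a generic vertex as $s=(t,y_s)$ with $t\in\{0,\dots,n-1\}$ and using $d(v,s)=d_{C_n}(0,t)+d_{K_2}(2,y_s)$ and $d(w,s)=d_{C_n}(d,t)+d_{K_2}(1,y_s)$, the condition $d(v,s)=d(w,s)$ separates according to the $K_2$-coordinate of $s$. Setting $f(t):=d_{C_n}(0,t)-d_{C_n}(d,t)$, a vertex with $y_s=2$ lies in $B_{v|w}$ iff $f(t)=1$, while a vertex with $y_s=1$ lies in $B_{v|w}$ iff $f(t)=-1$. Since $(t,2)\in A$ exactly when $t$ is even and $(t,1)\in A$ exactly when $t$ is odd, the whole statement reduces to proving that every solution of $f(t)=1$ is even and every solution of $f(t)=-1$ is odd.

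Next I would evaluate $f$ arc by arc around the cycle, using the four break-points $0,\ d,\ n/2,\ d+n/2$. On the two arcs where the shortest paths from $0$ and from $d$ to $t$ increase or decrease in lockstep, the two cycle-distances differ by a constant, so $f\equiv d$ on $d\le t\le n/2$ and $f\equiv -d$ on $d+n/2\le t\le n$; since $d\ge 3$, the equation $f(t)=\pm 1$ has no solution on these arcs. On the remaining two arcs the distances run in opposite directions, so $f$ is affine with slope $\pm 2$: explicitly $f(t)=2t-d$ for $0\le t\le d$ and $f(t)=n+d-2t$ for $n/2\le t\le d+n/2$. Hence $f(t)=\pm 1$ has exactly the solutions $t\in\{(d+1)/2,\,(d-1)/2\}$ and $t\in\{(n+d-1)/2,\,(n+d+1)/2\}$, and I would finish by reading off their parities from $n\equiv 2$ and $d\equiv 3\pmod 4$: the solutions of $f(t)=1$, namely $(d+1)/2$ and $(n+d-1)/2$, are even, while the solutions of $f(t)=-1$, namely $(d-1)/2$ and $(n+d+1)/2$, are odd. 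In every case $x_s+y_s\equiv 0\pmod 2$, so $s\in A$ and $B_{v|w}\subseteq A$.

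The hard part is precisely this last parity bookkeeping, and it explains why the mod-$4$ hypothesis cannot be weakened. Because $d$ is odd, the two distances $d_{C_n}(0,t)$ and $d_{C_n}(d,t)$ have opposite parity for \emph{every} $t$, so $f(t)$ is always odd and a purely mod-$2$ argument can never pin down the equidistant vertices. One is therefore forced to locate the bisector exactly, and it is only the finer interaction of $d\equiv 3\pmod 4$ with $n\equiv 2\pmod 4$ that makes the midpoint-type solutions $(d\pm 1)/2$ (and their antipodal counterparts) land on the correct side of the bipartition. I would also check the degenerate configuration $d=n/2$, occurring for instance when $n=6$, where the two constant arcs collapse; this case is still governed by the same two affine formulas and causes no extra difficulty.
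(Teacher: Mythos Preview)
Your proof is correct and follows essentially the same approach as the paper: normalise $v$ and $w$ by the automorphisms of $C_n\square K_2$, compute the bisector $B_{v|w}$ explicitly, and then verify that all four of its elements satisfy $x_s+y_s\equiv 0\pmod 2$. The only difference is presentational: the paper simply writes down the four bisector points and asserts their membership in $A$, whereas you actually derive them via the piecewise-linear analysis of the difference function $f(t)=d_{C_n}(0,t)-d_{C_n}(d,t)$, which makes the argument more self-contained and also explains transparently why both congruences $n\equiv 2$ and $d\equiv 3\pmod 4$ are needed.
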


\begin{proof}
By the symmetry of $C_n\square K_2$, without loss of generality, we fix $v=(1,1)$ and $w=(x_w,2)$ such that $d_{C_n}(1,x_w)\equiv3\pmod{4}$. Notice that $ x_w\equiv0\pmod{4}$.
With these assumptions in mind, we only need to observe that
 $$B_{v|w}=\left\{\left(\frac{x_w}{2},2\right), \left(\frac{x_w}{2}+1,1\right),\left(\mode{\frac{x_w}{2}+\frac{n}{2}}_n,1\right),\left(\mode{\frac{x_w}{2}+\frac{n}{2}+1}_n,2\right)\right\}\subseteq  A.$$
\end{proof}

As above, we assume that $n\equiv2\pmod{4}$, and define the subsets of vertices $B=\{v\in V(C_n\square P_2):x_v+y_v\equiv1\pmod{2}\}$ and $A=V(C_n\square P_2)\setminus  B$.
Furthermore, we define
$$Q=\left\{\{v,w\}\subseteq A:\,\,  d_{C_n}(x_v,x_w)=3\right\}.$$ Clearly, if $\{v,w\}\in Q$, then, without loss of generality, we take $w=(\mode{x_v+3}_n,\mode{y_v}_2+1)$.  Hence, if $s\in B_{v|w}$, then either $d(v,s)=d(w,s)=2$ or $d(v,s)=d(w,s)=\frac{n}{2}-1$ and
$$B_{v|w}=\left\{\left(\mode{x_v+1}_n,\mode{y_v}_2+1\right),(\mode{x_v+2}_n,y_v),\left(\mode{x_v+1+\frac{n}{2}}_n,y_v\right),\left(\mode{x_v+2+\frac{n}{2}}_n,\mode{y_v}_2+1\right)\right\}.$$
With this notation in mind, we state the following necessary lemmas.

\begin{lemma}\label{starlemma}
Let $n\ge 6$ be an integer such that $n\equiv 2 \pmod 4$.  Let $\{v,w\}\in Q$ be such that $x_w=\mode{x_v+3}_n$ and let $ l\in \{0,\dots, \lfloor\frac{n}{4}\rfloor-1\}$. If $\Tilde{v}_l=(\mode{x_v-2l}_n,y_v)$ and $\Tilde{w}_l=(\mode{x_w+2l}_n,y_w)$, then $B_{v|w}= B_{\Tilde{v}_l|\Tilde{w}_l}$.
\end{lemma}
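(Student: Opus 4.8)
The plan is to establish the two inclusions $B_{v|w}\subseteq B_{\tilde v_l|\tilde w_l}$ and $|B_{\tilde v_l|\tilde w_l}|\le 4$, and then to conclude equality from the fact, recorded just before the statement, that $B_{v|w}$ has exactly four elements. Throughout I would use the distance decomposition $d(u,s)=d_{C_n}(x_u,x_s)+d_{K_2}(y_u,y_s)$ in $C_n\square K_2$ and, exploiting the vertex-transitivity of $C_n$, fix $x_v=a$, so that $x_w=\mode{a+3}_n$, $x_{\tilde v_l}=\mode{a-2l}_n$ and $x_{\tilde w_l}=\mode{a+3+2l}_n$, while $y_{\tilde v_l}=y_v$ and $y_{\tilde w_l}=y_w\ne y_v$.

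For the inclusion $B_{v|w}\subseteq B_{\tilde v_l|\tilde w_l}$ I would evaluate, for each of the four vertices $s$ listed in $B_{v|w}$, the distances $d(\tilde v_l,s)$ and $d(\tilde w_l,s)$ and check they coincide. For example, for $s=(\mode{a+2}_n,y_v)$ one gets $d_{C_n}(x_{\tilde v_l},x_s)=2+2l$ and $d_{C_n}(x_{\tilde w_l},x_s)=1+2l$, while the $K_2$-terms are $0$ and $1$, so both distances equal $2+2l$; the other three vertices are identical in spirit, yielding the two ``near'' values $2+2l$ and the two ``far'' values $\frac n2-1-2l$. The only delicate point is that these are genuine shortest cycle distances, and this is exactly where the hypothesis $l\le\lfloor n/4\rfloor-1$ is used: it forces $2+2l\le \frac n2-1$ and $\frac n2-1-2l\ge 2$, so every arc length that occurs stays strictly below $\frac n2$ and the elementary formulas apply. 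At $l=0$ these four values reduce to $2,2,\frac n2-1,\frac n2-1$, matching the description preceding the lemma.

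For the reverse inclusion I would bound $|B_{\tilde v_l|\tilde w_l}|$. Since every vertex $s$ has $y_s\in\{y_v,y_w\}$ and $y_v\ne y_w$, cancelling the $K_2$-terms shows that $d(\tilde v_l,s)=d(\tilde w_l,s)$ is equivalent to $d_{C_n}(x_{\tilde v_l},x_s)-d_{C_n}(x_{\tilde w_l},x_s)=1$ when $y_s=y_v$, and to the same difference equal to $-1$ when $y_s=y_w$. The heart of the matter is a one-dimensional claim on $C_n$: if $p,q$ satisfy $L:=d_{C_n}(p,q)$ odd with $L\ge3$, then $x\mapsto d_{C_n}(p,x)-d_{C_n}(q,x)$ has exactly two monotone pieces of slope $\pm2$ running through every integer of the parity of $L$ between $-L$ and $L$, the two remaining arcs being constant equal to $\pm L$; consequently it attains each value $c$ with $|c|<L$ of the parity of $L$ exactly twice. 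Here $L=d_{C_n}(x_{\tilde v_l},x_{\tilde w_l})=\min\{3+4l,\,n-3-4l\}$ is odd and, for $l$ in the stated range, at least $3$; thus each of the equations $=1$ and $=-1$ has exactly two solutions, giving at most four vertices in $B_{\tilde v_l|\tilde w_l}$.

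I expect the main obstacle to be the bookkeeping when $3+4l>\frac n2$: then the shorter $p$--$q$ arc runs in the opposite direction, so the explicit solutions must be recomputed. I would handle this either by relabelling the endpoints (replacing $3+4l$ by $n-3-4l$, still odd and $\ge3$ under $l\le\lfloor n/4\rfloor-1$) or, more cleanly, by noting that the reflection $x\mapsto\mode{x_{\tilde v_l}+x_{\tilde w_l}-x}_n$ composed with the swap of the two $K_2$-layers is an automorphism of $C_n\square K_2$ interchanging $\tilde v_l$ and $\tilde w_l$, hence preserving $B_{\tilde v_l|\tilde w_l}$; this makes the solution set symmetric and independent of $l$. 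In either case the four solutions are exactly the vertices found in the first inclusion, so the two inclusions combine to give $B_{v|w}=B_{\tilde v_l|\tilde w_l}$.
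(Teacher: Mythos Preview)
Your proposal is correct and follows essentially the same route as the paper: verify $B_{v|w}\subseteq B_{\tilde v_l|\tilde w_l}$ by computing the distances from $\tilde v_l$ and $\tilde w_l$ to each of the four listed bisector points, and then conclude equality from $|B_{\tilde v_l|\tilde w_l}|=4$. The only difference is that the paper simply asserts this cardinality, whereas you actually justify it via the difference function $d_{C_n}(p,\cdot)-d_{C_n}(q,\cdot)$ on the cycle, so your argument is, if anything, more complete.
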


\begin{proof}
Without loss of generality, by the symmetry of $C_n\square K_2$, we restrict ourselves to $v=(n-1,1)$ and $w=(2,2)$.
Notice that
$$B_{v|w}=\left\{(1,1),(n,2),\left(\frac{n+2}{2} ,2\right),\left( \frac{n}{2},1 \right)\right\}\subseteq A.$$
We take $s=(1,1)$.
Hence,  for every  $ l\in \{0,\dots, \lfloor\frac{n}{4}\rfloor-1\}$,
\begin{align*}
d(\Tilde{v}_l,s)&=\min\{|\mode{x_v-2l}_n-x_s|,n-|\mode{x_v-2l}_n-x_s|\}+\mode{y_v+y_s}_2\\
&=\min\{\mode{n-1-2l}_n-1,n+1-\mode{n-1-2l}_n\}+\mode{1+1}_2\\
&=\min\{n-2l-2,2+2l\}\\
&=2+2l.
\end{align*}
On the other hand,
\begin{align*}
d(\Tilde{w}_l,s)&=\min\{|\mode{x_w+2l}_n-x_s|,n-|\mode{x_w+2l}_n-x_s|\}+\mode{y_w+y_s}_2\\
&=\min\{\mode{2+2l}_n-1,n+1-\mode{2+2l}_n\}+\mode{2+1}_2\\
&=\min\{1+2l,n-2l-1\}+1\\\
&=2+2l.
\end{align*}
Thus,  $d(\Tilde{v}_l,s)=2+2l=d(\Tilde{w}_l,s)$  for every $l\in\{0,\dots,\lfloor\frac{n}{4}\rfloor-1\}$.   The remaining three cases where  $s\in B_{v|w}\setminus \{(1,1)\}$ are analogous to the previous one. Hence, $B_{v|w}\subseteq B_{\Tilde{v}_l|\Tilde{w}_l}$ and, since  $| B_{\Tilde{v}_l|\Tilde{w}_l}|=4$, we conclude that $B_{v|w}= B_{\Tilde{v}_l|\Tilde{w}_l}$.
\end{proof}

\begin{lemma}\label{lemma4vertex}
For each $s\in A$, there exist exactly four pairs of vertices $\{v,w\}\in Q$,  such that $s\in B_{v|w}$.
\end{lemma}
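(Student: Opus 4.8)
The plan is to fix an arbitrary vertex $s=(x_s,y_s)\in A$ and determine directly all pairs $\{v,w\}\in Q$ with $s\in B_{v|w}$, relying on the explicit four-element description of $B_{v|w}$ recorded just before the statement. Writing each pair of $Q$ in the normalized form $w=(\mode{x_v+3}_n,\mode{y_v}_2+1)$, every such pair is completely encoded by its distinguished vertex $v=(x_v,y_v)\in A$, and $s\in B_{v|w}$ holds precisely when $s$ coincides with one of the four listed vertices of $B_{v|w}$. So I would invert, one slot at a time, the four coordinate equations that place $s$ in each position of the bisector.

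This inversion produces exactly four candidate vertices $v$ (writing $\bar y$ for the layer opposite to $y$):
\begin{align*}
v^{(1)}&=\left(\mode{x_s-1}_n,\bar y_s\right), & v^{(2)}&=\left(\mode{x_s-2}_n,y_s\right),\\
v^{(3)}&=\left(\mode{x_s-1-\tfrac{n}{2}}_n,y_s\right), & v^{(4)}&=\left(\mode{x_s-2-\tfrac{n}{2}}_n,\bar y_s\right),
\end{align*}
each determining a candidate pair $\{v^{(i)},w^{(i)}\}$ with $w^{(i)}=(\mode{x_{v^{(i)}}+3}_n,\ldots)$. The next step is to confirm that each candidate is admissible. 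Membership $v^{(i)}\in A$ is a parity check: since $x_s+y_s\equiv 0\pmod 2$ and $\tfrac{n}{2}$ is odd (because $n\equiv 2\pmod 4$), one verifies $x_{v^{(i)}}+y_{v^{(i)}}\equiv 0\pmod 2$ in all four cases. Then $w^{(i)}\in A$ as well (its coordinate sum changes by $3+1=4$), and $d_{C_n}(x_{v^{(i)}},x_{w^{(i)}})=\min\{3,n-3\}=3$ since $n\ge 6$, so $\{v^{(i)},w^{(i)}\}\in Q$ and $s\in B_{v^{(i)}|w^{(i)}}$ by construction.

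The main obstacle is the last step: proving the four pairs are pairwise distinct, so that the count is exactly four and not fewer. The vertices of the first two pairs occupy the cycle positions $\{x_s-2,x_s-1,x_s+1,x_s+2\}$, while those of the last two occupy the translated positions $\{x_s-2+\tfrac n2,x_s-1+\tfrac n2,x_s+1+\tfrac n2,x_s+2+\tfrac n2\}$, the two clusters being separated by $\tfrac n2$ around $C_n$. I would show these eight positions are pairwise distinct modulo $n$: within each cluster distinctness is immediate, and a position of the first cluster meets one of the second only if $a-b\equiv\tfrac n2\pmod n$ for offsets $a,b\in\{-2,-1,1,2\}$; ruling this out is the crux of the argument, and is exactly where the separation $\tfrac n2$ between the clusters is essential. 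A direct comparison of the two-element position sets then shows no two of the four candidate pairs coincide, and conversely every pair with $s$ in its bisector arises from exactly one of the four slots (within a fixed pair the four bisector vertices are distinct, so $s$ occupies at most one slot of any given pair). Hence precisely four pairs $\{v,w\}\in Q$ satisfy $s\in B_{v|w}$.

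As an independent sanity check I would count incidences globally: the graph on $A$ with edge set $Q$ is $2$-regular for the values of $n$ under consideration, so $|Q|=|A|=n$; since every bisector lies in $A$ (as recorded before the statement) and has four elements, there are $4n$ incidences $\{(\{v,w\},s):s\in B_{v|w}\}$, and the transitivity of $\mathrm{Aut}(C_n\square K_2)$ on $A$ forces each $s$ to lie in the same number $4n/n=4$ of bisectors, in agreement with the explicit computation.
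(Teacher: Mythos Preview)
Your argument is correct and follows essentially the same route as the paper's: both fix $s$ (you in general position, the paper at $s=(1,1)$ by symmetry) and recover the same four candidate pairs, the paper by splitting on the two possible common distances $d(v,s)=d(w,s)\in\{2,\tfrac{n}{2}-1\}$ while you invert each of the four coordinate slots in the explicit description of $B_{v|w}$. Your explicit attention to the pairwise distinctness of the four pairs and the closing double-counting check via transitivity on $A$ are in fact more than the paper's own proof supplies.
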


\begin{proof}
Let $\{v,w\}\in Q$  where  $w=(\mode{x_v+3}_n,\mode{y_v}_2+1)$.
If $s\in B_{v|w}$, then either $d(v,s)=d(w,s)=2$ or $d(v,s)=d(w,s)=\frac{n}{2}-1$. Without loss of generality, we fix $s=(1,1)$. In order to determine the coordinates of $v$ and $w$, we differentiate the following cases for $a\in\{ v,w \}$.

\medskip
\noindent Case 1. $d(s,a)=2$. In this case, $2=d_{C_n}(x_a,1)+\mode{y_a+1}_2$. Thus, if $y_a=1$, then
$2=d_{C_n}(x_a,1)=\min\{x_a-1, n-x_a+1\}$, and so $x_a=3$ or $x_a=n-1$. Now, if  $y_a=2$, then   $1=d_{C_n}(x_a,1)=\min\{x_a-1, n-x_a+1\}$, and so $x_a=2$ or $x_a=n$. Therefore, since $w=(\mode{x_v+3}_n,\mode{y_v}_2+1)$, there are only two possibilities, i.e.,
$v=(n-1,1)$ and $w=(2,2) $ or $v=(n,2)$ and $w=(3,1) $.

\medskip
\noindent Case 2. $d(s,a)=\frac{n}{2}-1$. In this case, $\frac{n}{2}-1=d_{C_n}(x_a,1)+\mode{y_a+1}_2$. Thus, if $y_a=1$, then
$\frac{n}{2}-1=d_{C_n}(x_a,1)=\min\{x_a-1, n-x_a+1\}$, and so $x_a=\frac{n}{2}$ or $x_a=\frac{n}{2}+2$. Now, if  $y_a=2$, then   $\frac{n}{2}-2=d_{C_n}(x_a,1)=\min\{x_a-1, n-x_a+1\}$, and so $x_a=\frac{n}{2}-1$ or $x_a=\frac{n}{2}+3$. Therefore, since $w=(\mode{x_v+3}_n,\mode{y_v}_2+1)$, there are only two possibilities, i.e.,
$v=(\frac{n}{2},1)$ and $w=(\frac{n}{2}+3,2) $ or $v=(\frac{n}{2}-1,2)$ and $w=(\frac{n}{2}+2,1) $.

According to the two cases above, the result follows.
\end{proof}

\begin{proposition}\label{lowerboundpropCkP2}
For any integer $n\ge 6 $ with $n\equiv 2 \pmod 4$,
$$
\xi(C_n\square K_2)\ge\frac{5n-2}{4}.
$$
\end{proposition}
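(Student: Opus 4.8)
The plan is to combine the bipartite lower bound with a covering argument fed by the structural lemmas already proved. First I would apply Proposition~\ref{PropoBipartiteUPC}: since $C_n\square K_2$ is bipartite with partite sets $A$ and $B$ of equal size $n$, any distance-equalizer set $X$ satisfies $A\subseteq X$ or $B\subseteq X$. The automorphism of $C_n\square K_2$ that flips the $K_2$-coordinate interchanges $A$ and $B$, so without loss of generality I may assume $B\subseteq X$. Writing $C=A\cap X$, we have $|X|=n+|C|$, so the whole proposition reduces to showing $|C|\ge\frac{n-2}{4}$.

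Next I would pin down exactly which pairs of uncovered vertices force an equalizer inside $A$, hence inside $C$. For $v,w\in A\setminus X$, Lemma~\ref{L2} (when $d_{C_n}(x_v,x_w)$ is even) and Lemma~\ref{L3} (when $d_{C_n}(x_v,x_w)\equiv1\pmod4$) always provide an equalizer in $B\subseteq X$; thus the only pairs that can fail are those with $d_{C_n}(x_v,x_w)\equiv3\pmod4$, and for these Lemma~\ref{L1} gives $B_{v|w}\subseteq A$. Consequently $C$ must contain an equalizer for every such pair whose two endpoints both lie in $A\setminus X$. Invoking Lemma~\ref{starlemma}, every bisector of a distance-$\equiv3\pmod4$ pair coincides with the bisector $B_p$ of some $p\in Q$, and each $B_p$ is the common bisector of the $\lfloor\frac{n}{4}\rfloor$ shifted pairs $\{\tilde{v}_l,\tilde{w}_l\}$ with $l\in\{0,\dots,\lfloor\frac{n}{4}\rfloor-1\}$.

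Then I would split into two cases. If some $B_p$ is disjoint from $C$, then for each $l$ the pair $\{\tilde{v}_l,\tilde{w}_l\}$ has all of its equalizers in $B_p$, none of which lies in $X$ (as $B_p\subseteq A$ and $A\cap X=C$); the distance-equalizer property therefore forces at least one endpoint of each $\{\tilde{v}_l,\tilde{w}_l\}$ into $C$. Since the $\tilde{v}_l$ occupy distinct positions carrying the $K_2$-coordinate $y_v$ while the $\tilde{w}_l$ occupy distinct positions carrying $y_w\neq y_v$, these $\lfloor\frac{n}{4}\rfloor$ pairs are pairwise vertex-disjoint; hitting all of them needs $\lfloor\frac{n}{4}\rfloor=\frac{n-2}{4}$ distinct vertices, so $|C|\ge\frac{n-2}{4}$. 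Otherwise every $B_p$ meets $C$; by Lemma~\ref{lemma4vertex} together with $|B_p|=4$ there are exactly $|Q|=n$ bisectors and each vertex of $A$ lies in precisely four of them, so hitting all $n$ bisectors forces $4|C|\ge n$, i.e.\ $|C|\ge\lceil n/4\rceil\ge\frac{n-2}{4}$. In both cases $|X|=n+|C|\ge\frac{5n-2}{4}$.

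I expect the main obstacle to be the bookkeeping in the first case: confirming that the equalizers of all $\lfloor\frac{n}{4}\rfloor$ shifted pairs are confined to the single four-element set $B_p$, so that an unhit $B_p$ genuinely forces an endpoint of each shifted pair into $C$, and that these pairs are truly vertex-disjoint. This is precisely what upgrades the naive estimate to the sharp constant $\frac{n-2}{4}$, it is where Lemmas~\ref{starlemma} and~\ref{lemma4vertex} do the decisive work, and it is where the hypothesis $n\equiv2\pmod4$ enters through the identity $\lfloor\frac{n}{4}\rfloor=\frac{n-2}{4}$.
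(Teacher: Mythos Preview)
Your proposal is correct and follows essentially the same approach as the paper: both arguments use Proposition~\ref{PropoBipartiteUPC} to assume $B\subseteq X$, then combine Lemma~\ref{lemma4vertex} (the counting of how many $Q$-bisectors a single vertex can hit) with Lemma~\ref{starlemma} (the $\lfloor n/4\rfloor$ vertex-disjoint shifted pairs sharing a common bisector) to force $|A\cap X|\ge\lfloor n/4\rfloor$. The only cosmetic difference is that the paper argues by contradiction (assume $|S^*|\le\lfloor n/4\rfloor-1$, use Lemma~\ref{lemma4vertex} to find an unhit bisector, then Lemma~\ref{starlemma} to reach a contradiction), whereas you phrase the same dichotomy as a direct case split; note also that the paper treats $n=6$ separately since $|Q|=3\ne n$ there, a point your Case~2 statement glosses over but which does not affect the bound.
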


\begin{proof}
Let $S$ be a $\xi(C_n\square K_2)$-set.
By using Proposition~\ref{PropoBipartiteUPC}, we consider $S$ of the form $S=B\cup S^*$, where $S^*\subseteq A$.
By Lemma \ref{L1}, $B_{v|w}\subseteq A$ for every pair $\{v,w\}\in Q$.
Hence, all pairs $\{v,w\}\in Q$ must be equalized by vertices in $S^*$ or $\{v,w\}\cap S^*\neq\emptyset$.
Suppose that $|S^*|\le \lfloor\frac{n}{4}\rfloor-1=\frac{n-2}{4}-1=\frac{n-6}{4}$.
Thus, if $n=6$, then $S=B$ and so $v=(1,1) $ and $w=(4,2) $ are not equalized by any vertex in $S$, which is a contradiction. From now on, we assume that $n\ge 10$, and so $|Q|=n$.
Thus, Lemma \ref{lemma4vertex} implies that there are at most $4\cdot|S^*|\le n-6<n=|Q|$ pairs of vertices of $Q$ which are equalized by the vertices of $S^*$.
Lemma~\ref{L1} also implies that pairs of vertices in $Q$ can only be equalized by vertices of $A\cap S=S^*$, so there will be at least $6$ pairs of vertices in $Q$ which are not equalized by any vertex in $S$.
Notwithstanding that, this fact does not imply that $S$ is not a distance-equalizer set, as those pairs of vertices that are not equalized could have at least one element in $S^*$.
Let $\{a,b\}\in Q$ be a  set of two vertices which are not equalized by any vertex in $S^*$, where  $\mode{x_a+3}_n=x_b$.
Lemma~\ref{starlemma} implies that the pairs $\Tilde{a}_l=(\mode{x_a-2l}_n,y_a)$ and $\Tilde{b}_l=(\mode{x_b+2l}_n,y_b)$ are not equalized by the vertices in $S^*$ for every $l\in\{0,\dots,\lfloor\frac{n}{4}\rfloor-1\}$.
Consequently, $\{\Tilde{a}_l,\Tilde{b}_l\}\cap S^*\neq\emptyset$ for every $l\in\{0,\dots,\lfloor\frac{n}{4}\rfloor-1\}$.
However, it is readily seen that $\Tilde{a}_i\neq\Tilde{a}_j$ and $\Tilde{b}_i\neq\Tilde{b}_j$, for any $i\neq j$.
Moreover, it also stands that $\Tilde{a}_i\neq\Tilde{b}_j$ for every $i,j\in\{0,\dots,\lfloor\frac{n}{4}\rfloor-1\}$, due to having $y_a\neq y_b$ as $\{a,b\}\in Q$.
As a result, to satisfy $\{\Tilde{a}_l,\Tilde{b}_l\}\cap S^*\neq\emptyset$ for every  $l\in\{0,\dots,\lfloor\frac{n}{4}\rfloor-1\}$ it is necessary to have at least $\left|\{0,\dots,\lfloor\frac{n}{4}\rfloor-1\}\right|=\lfloor\frac{n}{4}\rfloor$ different vertices in $S^*$, whereas $|S^*|\le \lfloor\frac{n}{4}\rfloor-1$, which is a contradiction.
Therefore, $|S^*|\ge \lfloor\frac{n}{4}\rfloor$ which implies that  $\xi(C_n\square K_2)\ge|S|\ge n+\lfloor\frac{n}{4}\rfloor=\frac{5n-2}{4}$.
\end{proof}

\begin{lemma}\label{lemmaUpperboundResto2}
Let   $n\ge 6 $ be an integer with $n\equiv 2 \pmod 4$.  Let
, and let  $a=(\frac{n}{2}-2,1)$,  $b=(\frac{n}{2}+1,2)$, $t=(n-2,2)$, $u=(1,1)$ and $S^*=\left\{(2i-1,1): \, \, i\in \left\{1,\dots,\frac{n-2}{4}\right\}\right\}$.
 Then for every $ \{v,w\}\in Q\setminus \{\{a,b\},\{t,u\}\}$ there exists $ s\in S^*$ such that $d(v,s)=d(w,s)$.
\end{lemma}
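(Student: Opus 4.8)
The plan is to reduce the statement to an elementary covering problem on the odd positions of the cycle. Write $n=4k+2$, so that $\frac n2=2k+1$ is odd and $\frac n2-2=2k-1$. Since every vertex of $S^*$ has second coordinate equal to $1$, the only elements of $B_{v|w}$ that can possibly lie in $S^*$ are those with second coordinate $1$. So for a fixed pair $\{v,w\}\in Q$ with $w=(\mode{x_v+3}_n,\mode{y_v}_2+1)$, I would first read off, from the explicit four‑element description of $B_{v|w}$ recalled just before the lemma, the two bisector vertices whose second coordinate equals $1$; denote their first coordinates by $c_1,c_2$. A short parity check, using that $v\in A$ forces $x_v\equiv y_v\pmod2$ and that $\frac n2$ is odd, shows that in both cases $y_v=1$ and $y_v=2$ these two candidates are described uniformly: $c_1$ is the odd element of $\{x_v+1,x_v+2\}$ and $c_2$ is the odd element of $\{x_v+1+\frac n2,\,x_v+2+\frac n2\}$. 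In particular both are odd, and a direct computation gives $|c_1-c_2|\in\{\frac n2-1,\frac n2+1\}$, so that $c_1$ and $c_2$ always lie at cyclic distance exactly $\frac n2-1=2k$ on $C_n$, regardless of the reduction modulo $n$.

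Next I would describe $S^*$ in the same language: its vertices occupy precisely the odd positions $\{1,3,\dots,\frac n2-2\}$ (the first $k$ odd positions), while the remaining $k+1$ odd positions form the arc $P^c=\{\frac n2,\frac n2+2,\dots,n-1\}$. The key geometric observation is that $P^c$ is an arc of odd positions of total span $4k+1-(2k+1)=2k=\frac n2-1$; consequently the only pair of elements of $P^c$ at cyclic distance $\frac n2-1$ is the pair of endpoints $\{\frac n2,\,n-1\}$. Since both $c_1,c_2$ are odd and are at cyclic distance exactly $\frac n2-1$, it follows that at least one of them lies in $\{1,3,\dots,\frac n2-2\}$ — and then $s=(c_i,1)\in S^*$ satisfies $d(v,s)=d(w,s)$ — unless $\{c_1,c_2\}=\{\frac n2,n-1\}$.

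It then remains to identify the pairs whose candidate set is exactly $\{\frac n2,n-1\}$. Solving $\{c_1,c_2\}=\{\frac n2,n-1\}$ for $x_v$ in each parity case pins down exactly two pairs: the case $y_v=1$ forces $x_v=\frac n2-2$, giving $\{v,w\}=\{a,b\}$, and the case $y_v=2$ forces $x_v=n-2$, giving $\{v,w\}=\{t,u\}$. As these are precisely the two pairs excluded in the statement, every remaining pair of $Q$ is equalized by a vertex of $S^*$, which proves the lemma.

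I expect the main obstacle to be the careful bookkeeping of the wrap‑around arithmetic: one must verify that $c_1,c_2$ are given by the stated formulas in both parity cases, and that the ``arc of span $2k$'' argument and the cyclic–distance computation are genuinely undisturbed by reductions modulo $n$, checking in particular the positions near $n$ and the degenerate small case $n=6$ (where $\{a,b\}$ and $\{t,u\}$ coincide, so only one pair is excluded). Once the two candidate positions and their common cyclic distance $\frac n2-1$ are established, the covering argument itself is immediate.
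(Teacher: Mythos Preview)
Your approach is correct and genuinely different from the paper's. The paper proceeds by brute-force case analysis: it partitions the range of $x_v$ into three intervals ($\{1,\dots,\tfrac n2-3\}$, $\{n-1,n\}$, and $\{\tfrac n2-1,\dots,n-3\}$) and in each case exhibits an explicit $s\in S^*$ equalizing $v$ and $w$, with a further parity split in the first and third cases. No structural reason is given for why exactly $\{a,b\}$ and $\{t,u\}$ escape.

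Your argument is more conceptual: you extract from the explicit description of $B_{v|w}$ the two bisector vertices with second coordinate $1$, observe that their first coordinates are always odd and at cyclic distance exactly $\tfrac n2-1=2k$, and then note that the complement $P^c=\{\tfrac n2,\tfrac n2+2,\dots,n-1\}$ of the $S^*$-positions among the odd residues is an arc of span exactly $2k$, so the only way both candidates miss $S^*$ is if they are the endpoints $\{\tfrac n2,n-1\}$. Solving backwards yields precisely the two excluded pairs. This explains \emph{why} the exceptional set is what it is, avoids the three-way case split, and would adapt more readily to variations of $S^*$; the paper's version, by contrast, is entirely constructive and gives the equalizer $s$ explicitly in every case, which is what is actually used downstream in Proposition~\ref{upperboundpropCkP2}. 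Both routes are short; yours just requires the modular-arithmetic checks you flagged (the parity of $\tfrac n2$, the wrap-around for $c_2$, and the $n=6$ coincidence $\{a,b\}=\{t,u\}$), all of which go through without difficulty.
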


\begin{proof}
 We differentiate three cases for pairs $\{v,w\}\in Q\setminus \{\{a,b\},\{t,u\} \}$.

 \vspace{3mm}

\noindent Case 1. $x_v\in \{1, \dots, \frac{n}{2}-3\}$ and $x_w=x_v+3$.
The pairs $\{v,w\}$ associated to this case can be described as $\{v_i,w_i\}$ where $v_i=(i,\mode{i+1}_2+1)$,  $w_i=(i+3,\mode{i}_2+1)$ and $ i\in  \{1, \dots, \frac{n}{2}-3\}$.
If $\mode{i}_2=0$,then $s=(x_{v_i}+1,1)\in S^*$ and $d(v_i,s)=d_{C_n}(x_{v_i},x_s)+d_{K_2}(y_{v_i},y_s)=1+1=2+0=d_{C_n}(x_{w_i},x_s)+d_{K_2}(y_{w_i},y_s)=d(w_i,s)$.
Now, if  $\mode{i}_2=1$, then $s=(x_{v_i}+2,1)\in S^*$ and $d(v_i,s)=d_{C_n}(x_{v_i},x_s)+d_{K_2}(y_{v_i},y_s)=2+0=1+1=d_{C_n}(x_{w_i},x_s)+d_{K_2}(y_{w_i},y_s)=d(w_i,s)$.

\vspace{3mm}

\noindent Case 2. $x_v\in\{n-1,n\}$ and $x_w=x_v+3$.
If ($v=(n-1,1)$ and $w=(2,2)$) or ($v=(n,2)$ and $w=(3,1)$) then for  $s=(1,1)\in S^*$ we have $d(v,s)=2=d(w,s)$.

\vspace{3mm}

\noindent Case 3. $x_v\in\{\frac{n}{2}-1,\dots,n-3\}$ and $x_w=x_v+3$.
In this case, the pairs $\{v,w\}$  can be characterized by   pairs $\{v_j,w_j\}$ and $\{v'_j,w'_j\}$, where  $v_j=(\frac{n}{2}+2j-3,2)$, $w_j=(\frac{n}{2}+2j,1)$,  $v'_j=(\frac{n}{2}+2j-2,2)$ and  $w'_j=(\frac{n}{2}+2j+1,1)$ with $j\in  \left\{1,\dots,\frac{n-2}{4}\right\}$.
Hence, $s_j=(2j-1,1)\in S^*$  for every $j \in \left\{1,\dots,\frac{n-2}{4}\right\}$   and
 $d(v_j,s_j)=d_{C_n}(\frac{n}{2}+2j-3,2j-1)+d_{K_2}(y_{v_j},y_{s_j})=(\frac{n}{2}-2)+1=(\frac{n}{2}-1)+0=d_{C_n}(\frac{n}{2}+2j,2j-1)+d_{K_2}(y_{v_j},y_{s_j})=d(w_j,s_j)$ and also
  $d(v'_j,s_j)=d_{C_n}(\frac{n}{2}+2j-2,2j-1)+d_{K_2}(y_{v'_j},y_{s_j})=(\frac{n}{2}-1)+0=(\frac{n}{2}-2)+1=d_{C_n}(\frac{n}{2}+2j+1,2j-1)+d_{K_2}(y_{w'_j},y_{s_j})=d(w'_j,s_j).$

\noindent According to the three cases above, the result follows.
\end{proof}

\begin{proposition}\label{upperboundpropCkP2}
For any integer $n\ge 6 $ with $n\equiv 2 \pmod 4$,
$$
\xi(C_n\square K_2)\le \frac{5n-2}{4}.
$$
\end{proposition}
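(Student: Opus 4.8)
The plan is to produce an explicit distance-equalizer set of cardinality $\frac{5n-2}{4}$, which together with Proposition~\ref{lowerboundpropCkP2} yields the stated equality. Since $C_n\square K_2$ is bipartite with partite classes $A$ and $B$ of size $n$ each, Proposition~\ref{PropoBipartiteUPC} makes it natural to look for a set of the form $S=B\cup S^*$ with $S^*\subseteq A$. I would take exactly the set $S^*=\left\{(2i-1,1):\, i\in\left\{1,\dots,\frac{n-2}{4}\right\}\right\}$ from Lemma~\ref{lemmaUpperboundResto2}, whose vertices are those of $A$ with odd first coordinate in $\{1,3,\dots,\frac{n}{2}-2\}$ and second coordinate $1$. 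Then $|S|=|B|+|S^*|=n+\frac{n-2}{4}=\frac{5n-2}{4}$, and the whole task reduces to checking that $S$ is a distance-equalizer set: for arbitrary $v,w\in V(C_n\square K_2)\setminus S=A\setminus S^*$ one must find $s\in S$ with $d(v,s)=d(w,s)$.

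First I would dispatch the pairs that can be equalized from inside $B$ by splitting on the residue of $d_{C_n}(x_v,x_w)$ modulo $4$. If this distance is even, Lemma~\ref{L2} provides $s\in B\subseteq S$; if it is $\equiv1\pmod4$, Lemma~\ref{L3} does the same. So only the residue $\equiv3\pmod4$ remains, and this is expected: Lemma~\ref{L1} shows that for such pairs the bisector $B_{v|w}$ lies entirely in $A$, so no vertex of $B$ can help and the vertices of $S^*$ must do the work.

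For a pair with $d_{C_n}(x_v,x_w)=3+4l$, orienting so that $x_w\equiv x_v+(3+4l)\pmod n$, I would reduce to a distance-$3$ representative by setting $p=(\mode{x_v+2l}_n,y_v)$ and $q=(\mode{x_w-2l}_n,y_w)$. A short parity check gives $p,q\in A$ and $x_q\equiv x_p+3\pmod n$, so $\{p,q\}\in Q$, and in the notation of Lemma~\ref{starlemma} one has $v=\Tilde{p}_l$ and $w=\Tilde{q}_l$ with $l$ in the admissible range $\{0,\dots,\frac{n-2}{4}-1\}$ (which follows from $3+4l\le n/2$). That lemma yields $B_{v|w}=B_{p|q}$, so it suffices to equalize the base pair $\{p,q\}$. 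If $\{p,q\}\notin\{\{a,b\},\{t,u\}\}$, then Lemma~\ref{lemmaUpperboundResto2} supplies a vertex $s\in S^*\cap B_{p|q}=S^*\cap B_{v|w}$, and $v,w$ are equalized.

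The hard part is to rule out the last possibility, namely that $\{p,q\}$ is one of the two exceptional pairs left unequalized by $S^*$. Here I would exploit a wrap-around phenomenon: pushing the endpoints of $\{t,u\}$ apart through the maps $\Tilde{\cdot}_l$ produces the family $\left\{\{(\mode{1+2l}_n,1),(\mode{n-2-2l}_n,2)\}:\, l\in\left\{0,\dots,\frac{n-2}{4}-1\right\}\right\}$, which, because of the cyclic identification, returns to a distance-$3$ pair at $l=\frac{n-2}{4}-1$ that is precisely $\{a,b\}$. Thus the two exceptional distance-$3$ pairs belong to a single $\Tilde{\cdot}_l$-family, and every member of this family contains the vertex $(\mode{1+2l}_n,1)=(1+2l,1)$, which lies in $S^*$ since $1+2l\in\{1,3,\dots,\frac{n}{2}-2\}$. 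Consequently, if $\{v,w\}$ reduced to an exceptional base pair, then one of $v,w$ would already lie in $S^*$, contradicting $v,w\in A\setminus S^*$. Hence this case never occurs for a genuine pair outside $S$; all remaining pairs are equalized by the previous steps, so $S$ is a distance-equalizer set and $\xi(C_n\square K_2)\le\frac{5n-2}{4}$.
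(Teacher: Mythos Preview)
Your proposal is correct and follows essentially the same route as the paper: the same set $S=B\cup S^*$, the same use of Lemmas~\ref{L2} and~\ref{L3} to dispose of residues $0,1,2\pmod 4$, and the same reduction via Lemma~\ref{starlemma} to a distance-$3$ base pair handled by Lemma~\ref{lemmaUpperboundResto2}. The only cosmetic difference is in the exceptional case: the paper simply checks directly that $\tilde{a}_l\in S^*$ and $\tilde{u}_l\in S^*$ for all relevant $l$, whereas you first observe that the $\{a,b\}$- and $\{t,u\}$-families coincide and then verify $\tilde{u}_l\in S^*$; these are equivalent (indeed $\tilde{a}_{l_0}=\tilde{u}_{\frac{n-6}{4}-l_0}$).
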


\begin{proof}
 As above, let   $B=\{v\in V(C_n\square K_2):\, \, x_v+y_v\equiv1\pmod{2}\}$ and $A=V(C_n\square K_2)\setminus B$.
We proceed to show that $S=B\cup S^*$ is a distance-equalizer set, where $$S^*=\left\{(2i-1,1): \, \, i\in \left\{1,\dots,\frac{n-2}{4}\right\}\right\}.$$

If $ v,w\in A$ with $d_{C_n}(x_v,x_w)\not\equiv 3\pmod{4}$, then by Lemmas~\ref{L2} and  \ref{L3} there exists  $ s\in B\subseteq S$ such that $d(v,s)=d(w,s)$.
From now on we assume that $v,w\in A $ with $d_{C_n}(x_v,x_w)\equiv 3\pmod{4}$. With these assumptions in mind,  we proceed to show that either there exists $s\in S^*$ such that $d(v,s)=d(w,s)$ or $\{v,w\}\cap S^*\ne \varnothing$.
Notice that  $d_{C_n}(x_v,x_w)\equiv 3\pmod{4}$, implies that we  can  identify $\{v,w\}$  with a pair of vertices $\{\Tilde{v}_l,\Tilde{w}_l\}$ such that $\{\Tilde{v}_0,\Tilde{w}_0\}\in Q$,  where $\Tilde{v}_l=(\mode{x_v-2l}_n,y_v)$,  $\Tilde{w}_l=(\mode{x_w+2l}_n,y_w)$ and  $l\in\{0,\dots,\frac{n-6}{4}\}$.

 First, we are going to analyze the cases of pairs  $ \{\Tilde{v}_0,\Tilde{w}_0\}\in Q\setminus \{\{a,b\},\{t,u\}\}$, where
 $a=(\frac{n}{2}-2,1)$,  $b=(\frac{n}{2}+1,2)$, $t=(n-2,2)$ and $u=(1,1)$.
 By Lemma~\ref{lemmaUpperboundResto2}, if $ \{\Tilde{v}_0,\Tilde{w}_0\}\in Q\setminus \{\{a,b\},\{t,u\}\}$, then  there exists $ s\in S^*$ such that $d(\Tilde{v}_0,s)=d(\Tilde{w}_0,s)$. Thus, by Lemma~\ref{starlemma}, $d(v,s)=d(w,s)$.

 On the other hand, assume that $\{\Tilde{v}_0,\Tilde{w}_0\}=\{a,b\}$ or $\{\Tilde{v}_0,\Tilde{w}_0\}=\{t,u\}$. In both cases we have that $\{v,w\}\cap S^*\ne \varnothing$. That is, $a=(\frac{n}{2}-2,1)\in S^*$ and also $\Tilde{a}_l=(\frac{n}{2}-2-2l,1)=(2\cdot(\frac{n-2}{4})-2l-1,1)=(2\cdot(\frac{n-2}{4}-l)-1,1)\in S^*,$ for every  $l\in\{0,\dots,\frac{n-2}{4}-1\}$. Analogously, $u=(1,1)\in S^*$ and  $\Tilde{u}_l=(\mode{1+2l}_n,1)\in S^*,$ for every  $l\in\{1,\dots,\frac{n-2}{4}-1\}$.
Therefore, the result follows.
\end{proof}

We can summarize the results of this section as follows.

\begin{theorem}
    If $n\ge 3$ is an integer, then
    $$\xi(C_n\square K_2)=\left\{ \begin{array}{ll}
    \frac{5n-2}{4}; &  \text{ if }  \, \, n\equiv 2 \pmod 4,\\
    n; &  \text{ otherwise.}
    \end{array} \right. $$
\end{theorem}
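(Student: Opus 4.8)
The plan is to recognize that this theorem is merely a consolidation of the propositions established throughout the section, so its proof reduces to a case analysis on the residue of $n$ modulo $4$, together with a verification that the hypotheses of each cited proposition hold for the relevant range of $n$. First I would partition the integers $n \ge 3$ into the three mutually exclusive and exhaustive cases: $n$ odd, $n \equiv 0 \pmod 4$, and $n \equiv 2 \pmod 4$.

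For $n$ odd with $n \ge 3$, the first proposition of this section already gives $\xi(C_n \square K_2) = n$, matching the ``otherwise'' branch of the formula. For $n \equiv 0 \pmod 4$ (so automatically $n \ge 4$), Proposition~\ref{EQDIM_CkP2_k_0_mod4} yields $\xi(C_n \square K_2) = n$ as well. Since every $n \ge 3$ that is not congruent to $2$ modulo $4$ falls into exactly one of these two cases, the value $n$ is established for the entire ``otherwise'' branch. It then remains to treat $n \equiv 2 \pmod 4$. Here I would observe that the standing hypothesis $n \ge 3$ forces $n \ge 6$, so the requirements of Propositions~\ref{lowerboundpropCkP2} and~\ref{upperboundpropCkP2} (both of which assume $n \ge 6$ with $n \equiv 2 \pmod 4$) are automatically satisfied. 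Combining the lower bound $\xi(C_n \square K_2) \ge \frac{5n-2}{4}$ from Proposition~\ref{lowerboundpropCkP2} with the upper bound $\xi(C_n \square K_2) \le \frac{5n-2}{4}$ from Proposition~\ref{upperboundpropCkP2} gives the equality $\xi(C_n \square K_2) = \frac{5n-2}{4}$, which is precisely the first branch of the claimed expression.

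Because the three cases partition all integers $n \ge 3$, assembling them yields the full formula. There is no genuine mathematical obstacle at this stage, since the entire substance of the result lives in the earlier propositions and lemmas; the only point demanding mild attention is the bookkeeping that confirms the cited statements jointly cover every $n \ge 3$ and that the threshold $n \ge 6$ in the $n \equiv 2 \pmod 4$ propositions is never violated (which holds precisely because no $n$ with $3 \le n < 6$ is congruent to $2$ modulo $4$). Hence the theorem follows directly by citation of the established propositions.
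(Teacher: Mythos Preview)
Your proposal is correct and matches the paper's own treatment exactly: the theorem is stated as a summary of the section, with the paper writing ``We can summarize the results of this section as follows'' and relying on the preceding propositions (odd $n$, $n\equiv 0\pmod 4$, and the two bounds for $n\equiv 2\pmod 4$) without further argument. Your only addition is the explicit bookkeeping that $n\equiv 2\pmod 4$ with $n\ge 3$ forces $n\ge 6$, which is precisely the kind of check the paper leaves implicit.
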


\section{Squared grid graphs}

A \textit{grid graph} is known to be the Cartesian product graph of two paths $P_k$ and $P_n$. A grid $P_k\square P_n$ is \textit{squared} if $k=n$. In this section, we focus on finding the value of $\xi\left(P_n\square P_n\right)$ and shall use the following notation to refer to the vertices of $P_n\square P_n$.
$V(P_n\square P_n)=V(P_n)\times V(P_n)=\left\{1,...,n\right\}\times\left\{1,\dots,n\right\}=\left[n\right]\times\left[n\right]$.
Thus, if $v\in V(P_n\square P_n)$, then $v=(x_i,y_j)$ for some $i,j\in\left[n\right]$.

\begin{theorem}\label{th:grid}
Let $P_n$ be a path of order $n\ge 2$. Then
$$
\xi\left(P_n\square P_n\right)=\left\lceil\frac{n^2}{2}\right\rceil.
$$
\end{theorem}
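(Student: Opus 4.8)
The plan is to exploit the bipartite structure of $G=P_n\square P_n$, whose distance is the Manhattan distance $d((x_1,y_1),(x_2,y_2))=|x_1-x_2|+|y_1-y_2|$ and whose colour classes are $A=\{(i,j):i+j\equiv 0\pmod 2\}$ and $B=\{(i,j):i+j\equiv 1\pmod 2\}$. A direct count gives $|A|=\lceil n^2/2\rceil\ge|B|=\lfloor n^2/2\rfloor$, with equality exactly when $n$ is even. I would prove the two inequalities $\xi\ge\lceil n^2/2\rceil$ and $\xi\le\lceil n^2/2\rceil$ separately.

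For the lower bound, Proposition~\ref{PropoBipartiteUPC} forces any distance-equalizer set $S$ to satisfy $A\subseteq S$ or $B\subseteq S$. If $A\subseteq S$, or if $n$ is even (so $|B|=\lceil n^2/2\rceil$), the bound $|S|\ge\lceil n^2/2\rceil$ is immediate. The only remaining case is $n$ odd together with $B\subseteq S$, and here I would show that $B$ itself is \emph{not} a distance-equalizer set, which upgrades the estimate to $|S|\ge|B|+1=\lceil n^2/2\rceil$. The witness is the pair $(1,1),(n,n)$, both of which lie in $A$ when $n$ is odd and so are legitimate test vertices for $S=B$: for $w=(a,b)$ one has $d(w,(1,1))=(a-1)+(b-1)$ and $d(w,(n,n))=(n-a)+(n-b)$, which are equal iff $a+b=n+1$; since $n+1$ is even, every such vertex lies in $A$, so no vertex of $B$ equalizes the pair.

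For the upper bound I would show that $A$ is itself a distance-equalizer set, giving $\xi\le|A|=\lceil n^2/2\rceil$; the task is to equalize, inside $A$, every pair $x,y\in B$. After reducing the configurations via the colour-preserving symmetries of the grid (the two diagonal reflections and the $180^\circ$ rotation), the cases are: $x,y$ in a common row or column, handled by taking the segment midpoint and, if its colour is wrong, shifting it one step orthogonally (possible since $n\ge2$); and the two-dimensional cases with a nondegenerate bounding box. In the latter, the $L^1$-bisector consists of a central diagonal or antidiagonal segment, on which the colour is constant, together with two rays (horizontal or vertical, according to the aspect ratio of the box) along which the colour alternates at each step. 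If the central segment already has colour $A$ we are done; otherwise the vertex of $A$ adjacent to an endpoint of that segment, which lies on a ray, is the desired equalizer. These rays can vanish only when the box spans the whole grid, i.e.\ for the two corner pairs: $(1,1),(n,n)$ always lies in $A$ and hence is not a test pair for $S=A$, whereas $(1,n),(n,1)$ (which may lie in $B$) has its entire bisector on the main diagonal $\{(a,a):a\in[n]\}\subseteq A$, so it too is equalized within $A$.

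The main obstacle is precisely this last, geometric step. Taxicab bisectors are not single segments: when the bounding box is a square they acquire genuinely two-dimensional pieces (which fortunately contain both colours unless they collapse onto grid corners), and in all cases they are truncated by the boundary of the grid. The argument must therefore track exactly when each ray is nonempty and carry the colour bookkeeping through every boundary and corner configuration, confirming that some vertex of $A$ is equidistant from $x$ and $y$ in each case. Once that verification is complete, combining the two bounds yields $\xi(P_n\square P_n)=\lceil n^2/2\rceil$.
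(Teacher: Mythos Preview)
Your proposal is correct and follows essentially the same route as the paper. Both arguments use Proposition~\ref{PropoBipartiteUPC} together with the antipodal pair $(1,1),(n,n)$ to handle the lower bound in the odd case, and both establish the upper bound by proving that the larger colour class is a distance-equalizer set via a midpoint-plus-orthogonal-shift argument; the only difference is cosmetic, in that the paper writes down explicit coordinates for the midpoint $z$ and its shifted neighbour $z'$ in each case, whereas you phrase the same construction in terms of the geometry of the taxicab bisector.
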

\begin{proof}
We first show that $\xi(P_n\square P_n)\geq\left\lceil\frac{n^2}{2}\right\rceil$. Note that $P_n\square P_n$ is a bipartite graph, with bipartition sets $A_n=\{(x_i,y_j)\in V(P_n\square P_n):\mode{i}_2\neq \mode{j}_2\}$ and $B_n=\{(x_i,y_j)\in V(P_n\square P_n):\mode{i}_2=\mode{j}_2\}$, and that the cardinality of these sets are either equal (if $n$ is even), or differ by just one (if $n$ is odd). In the latter case, note that $|A_n|=|B_n|-1$.
Hence, if $S$ is a distance equalizer basis of $P_n\square P_n$, then by Proposition~\ref{PropoBipartiteUPC}, it holds that $A_n\subseteq S$ or $B_n\subseteq S$, which implies that $\xi(P_n\square P_n)=|S|\geq\min\{|A_n|,|B_n|\}=|A_n|$. Thus, if $n$ is even or $B_n\subseteq S$, then
$\xi(P_n\square P_n)\ge\left\lceil\frac{n^2}{2}\right\rceil$.
Now, assume that $n$ is odd and $A_n\subseteq S$. In this case, for the vertices $v=(1,1)$, $w=(n,n)$ and $s=(s_x,s_y)$ such that $d(v,s)=d(s,w)$, we deduce that $x_s+y_s=n+1\equiv 0 \pmod 2$, which implies $s\in B_n$, and so $\xi(P_n\square P_n)=|S|\ge |A_n|+1=\left\lfloor\frac{n^2}{2}\right\rfloor+1=\left\lceil\frac{n^2}{2}\right\rceil$.

\medskip
It remains to prove that $\xi(P_n\square P_n)\leq\lceil\frac{n^2}{2}\rceil$.
To this end, we claim that the set $B_n$, defined above, is a distance-equalizer set of $P_n\square P_n$.
Let $v=(i,j)$ and  $w=(k,r)$ be two vertices from $V(P_n\square P_n)\setminus B_n$ such that $i\le k$ and $j\le r$. Since $d(v,w)=k-i+r-j$ is even, there is at least one vertex $z$ that lies is a shortest path between $v$ and $w$, and such that $d(v,z)=d(z,w)$. If $d(v,w)/2$ is an odd number, then the vertex $z$ belongs to $B_n$, and we are done, i.e., $v,w$ are equidistant to $z\in B_n$.
Hence, we may assume that $d(v,w)/2=\frac{k-i+r-j}{2}$ is even, and so, $z\notin B_n$. We differentiate two cases.

\medskip
\noindent Case 1. $r-j\ge k-i$.
 If $k\le n-1$, then we can take $z=\left(k, r-\frac{k-i+r-j}{2}\right)$ and so there exists $z'=(k+1,y_z)\in B_n$, which is equidistant from $v$, $w$. Now, if $k=n$, then $i>1$. Thus, we
can take $z=\left(i, j+\frac{k-i+r-j}{2}\right)$ and so there exists $z'=(i-1,y_z)\in B_n$, which is equidistant from $v$, $w$.

\medskip
\noindent Case 2. $r-j\le k-i$. If $r\le n-1$, then we can take $z=\left( k-\frac{k-i+r-j}{2},r\right)$ and so there exists $z'=(x_z,r+1)\in B_n$, which is equidistant from $v$, $w$. Now, if $r=n$, then $j>1$. Thus, we
can take $z=\left( i+\frac{k-i+r-j}{2},j\right)$ and so there exists $z'=(x_z,j-1)\in B_n$, which is equidistant from $v$, $w$.

In order to complete the proof, we only need to consider the case $i\le k$ and $j\ge r$ in the choice of the vertices $v=(i,j)$ and  $w=(k,r)$. In such situation, the procedure is rather analogous to the one described above when $i\le k$ and $j\le r$. Thus, in every case, we confirm our claim that $B_n$ is a distance equalizer basis of $P_n\square P_n$, which completes the proof.
\end{proof}

We remark that the formula given in Theorem \ref{th:grid} seems to be true also in some other possible structures of (not squared) grid graphs, but not in every one. For example, in Table \ref{th:grid} some computer calculations are shown for several grids. One can see that the difference between the value of the formula from  Theorem \ref{th:grid} and the real value of the equidistant dimension of a grid $P_k\square P_n$ begins to increase as soon as the absolute value of the difference between $n$ and $k$ increases.

\begin{table}[h!]
$$
\begin{array}{c|ccccccccccccccccccc}
\tikz{\node[below left, inner sep=1pt] (k) {k};%
      \node[above right,inner sep=1pt] (n) {n};%
      \draw (k.north west|-n.north west) -- (k.south east-|n.south east);}
  & 2 & 3 & 4 & 5 & 6 & 7 & 8 & 9 & 10 & 11 & 12 & 13 & 14 & 15 & 16 & 17 & 18 & 19 & 20\\
\hline
2 & 0 & 0 & 1 & 1 & 1 & 1 & 2 & 2 & 3 & 3 & 3 & 4 & 4 & 4 & 5 & 5 & 6 & 6 & 7\\
3 & 0 & 0 & 0 & 0 & 0 & 0 & 1 & 1 & 2 & 2 & 2 & 2 & 2 & 2 & 4 & 4 & 4 & 4 & 5\\
4 & 1 & 0 & 0 & 0 & 1 & 1 & 2 & 2 & 2 & 2 & 3 & 3 & 3 & 3 & 4 & 4 & 5 & 5 & 6\\
5 & 1 & 0 & 0 & 0 & 0 & 0 & 0 & 0 & 1 & 1 & 2 & 2 & 3 & 3 & 3 & 3 & 3 & 3 & 4\\
6 & 1 & 0 & 1 & 0 & 0 & 0 & 1 & 1 & 2 & 2 & 3 & 3 & 3 & 3 & 4 & 4 & 4 & 4 & 5\\
7 & 1 & 0 & 1 & 0 & 0 & 0 & 0 & 0 & 0 & 0 & 1 & 1 & 2 & 2 & 3 & 3 & 4 & 4 & 4\\
8 & 2 & 1 & 2 & 0 & 1 & 0 & 0 & 0 & 1 & 1 & 2 & 2 & 3 & 3 & 4 & 4 & 4 & 4 & 5\\
9 & 2 & 1 & 2 & 0 & 1 & 0 & 0 & 0 & 0 & 0 & 0 & 0 & 1 & 1 & 2 & 2 & 3 & 3 & 4\\
10 & 3 & 2 & 2 & 1 & 2 & 0 & 1 & 0 & 0 & 0 & 1 & 1 & 2 & 2 & 3 & 3 & 4 & 4 & 5\\
11 & 3 & 2 & 2 & 1 & 2 & 0 & 1 & 0 & 0 & 0 & 0 & 0 & 0 & 0 & 1 & 1 & 2 & 2 & 3\\
12 & 3 & 2 & 3 & 2 & 3 & 1 & 2 & 0 & 1 & 0 & 0 & 0 & 1 & 1 & 2 & 2 & 3 & 3 & 4\\
13 & 4 & 2 & 3 & 2 & 3 & 1 & 2 & 0 & 1 & 0 & 0 & 0 & 0 & 0 & 0 & 0 & 1 & 1 & 2\\
14 & 4 & 2 & 3 & 3 & 3 & 2 & 3 & 1 & 2 & 0 & 1 & 0 & 0 & 0 & 1 & 1 & 2 & 2 & 3\\
15 & 4 & 2 & 3 & 3 & 3 & 2 & 3 & 1 & 2 & 0 & 1 & 0 & 0 & 0 & 0 & 0 & 0 & 0 & 1\\
16 & 5 & 4 & 4 & 3 & 4 & 3 & 4 & 2 & 3 & 1 & 2 & 0 & 1 & 0 & 0 & 0 & 1 & 1 & 2\\
17 & 5 & 4 & 4 & 3 & 4 & 3 & 4 & 2 & 3 & 1 & 2 & 0 & 1 & 0 & 0 & 0 & 0 & 0 & 0\\
18 & 6 & 4 & 5 & 3 & 4 & 4 & 4 & 3 & 4 & 2 & 3 & 1 & 2 & 0 & 1 & 0 & 0 & 0 & 1\\
19 & 6 & 4 & 5 & 3 & 4 & 4 & 4 & 3 & 4 & 2 & 3 & 1 & 2 & 0 & 1 & 0 & 0 & 0 & 0\\
20 & 7 & 5 & 6 & 4 & 5 & 4 & 5 & 4 & 5 & 3 & 4 & 2 & 3 & 1 & 2 & 0 & 1 & 0 & 0\\
\end{array}
$$
\caption{Table of values of $q=\xi(P_k\square P_n)-\left\lceil\frac{n\cdot k}{2}\right\rceil$}\label{tab:grids}
\end{table}

\section{Concluding remarks}

The results presented in this article allow to identify several research lines that can be further on developed. In particular, we remark the following ones.

\begin{itemize}
    \item Which is the value of $\xi(G)$ when $G$ is any $n$-dimensional Hamming graph for any $n\ge 3$?
    \item Which is the value of the $\xi(G)$ when $G$ is any grid graph (not squared)?

    Even more general, the case when $G$ is the Cartesian product of $k\ge 2$ paths seems to be very challenging and of interest.
    \item In view of the results from Section \ref{sec-hyper}, we wonder on which is the value of $\xi(Q_n)$ when $n\equiv0 \text{ mod }4$. We strongly believe that the exact value of it coincides with the upper bound of item Theorem \ref{th-hyper} (ii). That is, we suspect that $\xi(Q_n)=2^{n-1}+2^{\frac{n}{2}-2}$ when $n\equiv0 \text{ mod }4$.
    \item Study the equidistant dimension of Cartesian product graphs is general.
    \item Characterize the bipartite graphs for which the lower bound from Proposition \ref{PropoBipartiteUPC} is tight.
    \item Study the equidistant dimension of prism graphs is general.
\end{itemize}

\section*{Acknowledgement}
I. G. Yero has been partially supported by ``Ministerio de Ciencia, Innovaci\'on y Universidades'' through the grant PID2023-146643NB-I00.

\section*{Conflicts of interest}
The authors do not have any additional conflicts of interest to declare.

\end{document}